\documentclass{article}

\usepackage{arxiv}

\usepackage{graphicx} 
\usepackage{amsmath}
\usepackage{amsthm}
\usepackage{amssymb}
\usepackage{euscript}
\usepackage{dsfont}
\usepackage{bbm}

\newcommand{\lsum}{\sum\limits}

\newcommand{\prb}{\mathbb P}

\newcommand{\E}{\mathbb E}

\newcommand{\R}{\mathbb R}
\newcommand{\N}{\mathbb N}

\DeclareMathOperator{\Var}{Var}

\newtheorem{lem}{Lemma}
\newtheorem{thm}{Theorem}
\newtheorem{remark}{Remark}

\makeindex             


\begin{document}

\title{Discrete-time weak approximation of a Black-Scholes model with drift and volatility Markov switching}

\author{
  Vitaliy Golomoziy \\
    Department of Probability Theory and Mathematical Statistics,\\
    Taras Shevchenko National University of Kyiv,\\
    64 Volodymyrska, 01601, Kyiv, Ukraine,\\
    \texttt{vitaliy.golomoziy@knu.ua}\\
  \And
  \textbf{Kamil Klad\'{i}vko}\\
  School of Business,\\
  \"{O}rebro University, \\
    701 82 \"{O}rebro, Sweden,\\
    \texttt{kamil.kladivko@oru.se }\\
  \And
  \textbf{Yuliya Mishura}\\
    Department of Probability Theory and Mathematical Statistics,\\
    Taras Shevchenko National University of Kyiv,\\
    64 Volodymyrska, 01601, Kyiv, Ukraine,\\
    \texttt{yuliyamishura@knu.ua}
}

\date{2024-12-23}
%
%
\maketitle

\begin{abstract}
  We consider a continuous-time financial market with an asset whose price is modeled by a linear stochastic differential equation with drift and volatility switching  driven by a uniformly ergodic jump Markov process with a countable state space (in fact, this is a Black-Scholes model with Markov switching). We construct a multiplicative scheme of series of discrete-time markets with discrete-time Markov switching. First,  we establish that the  discrete-time switching Markov chains  weakly converge to the limit continuous-time Markov process. Second, having this in hand, we apply conditioning on Markov chains    and prove that the discrete-time market models themselves weakly converge to the Black-Scholes model with Markov switching.   The convergence is proved under very general assumptions both on the discrete-time net profits and on a generator of a continuous-time Markov switching process.
  \end{abstract}


\section{Introduction}
Without a doubt, real financial markets operate in discrete time. However, it is easier to perform analytical calculations on such markets, in particular, to evaluate contingent  claims, in continuous time. To eliminate this contradiction, it is logical to consider such models of financial markets with discrete time, which in one sense or another approximate well the models with continuous time, which, in turn, are amenable to analytical calculations. In particular, it is logical to consider stochastic models of discrete-time markets that weakly (i.e. in the sense of weak convergence of probability measures) approximate a suitable continuous-time model. This approach goes back to the weak convergence of the simplest Cox-Ross-Rubinstein model to geometric Brownian motion (the convergence of more general pre-limit schemes is contained in the book \cite{felmer}).
  
It is then natural to ask whether the convergence of prices of options and other contingent  claims is invariant with respect to such weak convergence of prices of primary assets. The answer to this question was considered in the works \cite{hubalek}, \cite{MiRa} and \cite{prigent}.
   Of course, these works do not exhaust the list of papers and books devoted to this issue.   

Since the introduction of the Black-Scholes model, it has become evident 
that the assumption of constant volatility lacks empirical support
in market option prices. Similarly, in portfolio optimization or 
trading strategies, modeling the rate of return as stochastic is
advantageous, as it aligns with observed patterns in time series of
asset returns. A popular, empirically supported, and relatively 
straightforward approach to incorporating stochastic volatility 
or drift into modeling is through Markov switching. This approach
assumes that the volatility and drift components 
of a geometric Brownian motion are functions of a Markov
process. Models of this nature are often referred to as 
\textit{regime-switching} or \textit{Markov-modulated} 
volatility and drift models.

An early study that introduced volatility switching into option
valuation is \cite{naik93}. Extensive research on option
pricing under Markov-modulated volatility followed. For 
developments in the continuous-time setting, see, 
for example, \cite{dimasi95} -- \cite{kirkby20}, listed in chronological order.

As an alternative to continuous-time models, various discrete-time lattice methods 
have been proposed. One of the early tree methods for 
modeling volatility switching between two states 
is the pentanomial tree introduced in \cite{bollen98}. 
For other tree methods that incorporate more than two 
volatility states or address the valuation of exotic options, 
see, for instance, \cite{aingworth06}-- \cite{yuen10}. 
These references, while far from exhaustive, are listed in 
chronological order. For discrete-time approximations 
of continuous-time Markov switching models in 
other applications, see, for example, 
\cite{savku_2017, Savku2021}.

Lattice methods are relatively easy to understand and implement, 
making them practically relevant in many situations. However, these 
methods are often constructed through intuitive discretization
of the continuous-time model, which does not guarantee 
convergence to the continuous-time counterpart.
To the best of our knowledge, relatively little attention 
has been paid to studying financial market models 
with discrete time and Markov switching that converge
to their continuous-time limits. Papers that present 
convergence results include \cite{guo01, yin04, leduc17}. Weak convergence is investigated in papers \cite{guo01, yin04}. However, our model is much more general, as we construct our discrete markets based on generic random variables that represent net profit rates produced by an asset, we consider potentially infinite set of switching states and impose more general conditions on the switching process generator. Moreover, we have successfully taken advantage of the benefits that the method of conditioning provides. Namely, applying classical results of \cite{gikhman-skorokhod-stoch-proc} on weak convergence of Markov processes, we establish weak convergence of our switching processes, and then we apply conditioning on Markov switching in order to prove weak convergence of the market model at whole. 
 
\section{Preliminaries}  So, our goal is to  construct a sequence of discrete-time financial markets that allow  drift and volatility switching by appropriate Markov chains (in paper \cite{driftSwitching} a similar approximation is constructed for a financial market with a drift switching only). This sequence is constructed in order to converge weakly to the Black-Scholes model with   switching by Markov process with continuous time.  The limit model has a form
\begin{equation}\label{initial_model}
    dX_t = \mu_{Y_t} X_t dt + \sigma_{Y_t} X_t d W_t,\ X_0 = x_0,
\end{equation}
where $W=(W_t)_{t\ge 0}$ is a Wiener process, $x_0>0$  is non-random, $\left(Y_t\right)_{t\ge 0}$ is a uniformly ergodic  Markov jump process with values in $\N$, independent of $W$.
We assume that the processes $(Y_t)_{t\ge 0}$ and $(W_t)_{t\ge 0}$ are defined on the 
stochastic basis with filtration $(\Omega, \EuScript{F}, (\EuScript{F}_t)_{t\ge 0}, \prb)$, 
where $\EuScript{F}_t = \sigma\{W_s, Y_s, 0\le s \le t\}$. 
Let $\{\mu_k, k\ge 1\}$ and $\{\sigma_k, k\ge 1\}$ be 
the sequences of real numbers, representing possible values for drift and volatility respectively, $\sigma_k > 0$ and 
\begin{gather}\label{sigma_def}
    \mu = \sup_{n\ge 1} |\mu_n| < \infty, \nonumber \\
    \sigma = \sup_{n\ge 1} \sigma_n < \infty.
\end{gather}
We assume that $Y_0=1$ and $(Y_t)_{t\ge 0}$ has an 
infinitesimal matrix
\begin{equation}\label{intensity_matrix}
  \mathbb{A} = \left(
  \begin{array}{cccc}
    -\lambda_1& \lambda_{12}& \lambda_{13}& \ldots \\
    \lambda_{21}& -\lambda_2& \lambda_{23}& \ldots \\
    \lambda_{31}& \lambda_{32}& -\lambda_3& \ldots \\
    \ldots
  \end{array}
  \right),
\end{equation}
such that $\lambda_{ij} > 0$ and $\sum_{j} \lambda_{ij} = \lambda_i$, for all $i \in \N$.
We assume that 
\begin{equation}\label{lambda_start_def}
\lambda_* = \inf_i \lambda_i > 0,\ \mbox{and}\ \lambda^* = \sup_i \lambda_i < \infty,
\end{equation}
Let us define jump and occupation times for $\left(Y_t\right)_{t\ge 0}$ as follows:
\begin{gather}\label{def_tau}
\tau_0 = 0,\ \tau_n = \inf\{t \ge \tau_{n-1} : Y_t \neq Y_{\tau_{n-1}}\},\ n\ge 1,\\
\theta_0 = 0,\ \theta_k = \tau_{k} - \tau_{k-1},\ k\ge 1. \nonumber
\end{gather}
Also, the number of jumps equals
\begin{equation}\label{Nt_def}
N_t = \sup\{n \ge 0: \tau_n < t\},
\end{equation}
and embedded Markov chain $\{y_n, n\ge 0\}$ has a form
\begin{equation}\label{emb_chain_def}
y_n = Y_{\tau_n}, n\ge 0.
\end{equation}

The unique strong solution of equation  \eqref{initial_model}  can be represented as 
an exponent of the form
\begin{equation*} 
X_t = X_0 \exp\left(\lsum_{k=0}^{N_t} \left(\left(\mu_{y_k} - {\sigma^2_{y_k}\over 2}\right)\left(\tau^t_{k+1} - \tau_k\right) + \sigma_{y_k}\left(W_{\tau^t_{k+1}} - W_{\tau_k}\right)\right)
\right),
\end{equation*}
where $\tau^t_k = \tau_k \wedge t$, $N_t$ is defined in (\ref{Nt_def}) and $\tau_k$, $y_k$ are defined in (\ref{def_tau}), (\ref{emb_chain_def}) correspondingly. 
We will find it convenient to work with the logarithms
\begin{equation}\label{ut_def}
U_t = \log\left(X_t\right) = \log\left(X_0\right) + \lsum_{k=0}^{N_t} \left(\left(\mu_{y_k} - {\sigma^2_{y_k}\over 2}\right)\left(\tau^t_{k+1} - \tau_k\right) + \sigma_{y_k}\left(W_{\tau^t_{k+1}} - W_{\tau_k}\right)\right).
\end{equation}

\section{Discrete-time multiplicative approximation of the diffusion model 
with Markov switching}\label{discrete-time-definitions}

The main goal of this paper is to construct a sequence
of discrete-time versions of $X$, that is the geometric Brownian motion
with Markov modulated drift and volatility given by~\eqref{initial_model}, such
that these discrete-time versions weakly converge in Skorokhod topology 
 to the process $X$ on any fixed time interval $[0,T]$.  

So, moving in this direction, we consider the limit process $X$ on the fixed time interval $ [0,T]$, where $T>0$ is a maturity date, and create a scheme of series containing  discrete-time models, numbered by $N\in \mathbb{N}$ . Our $N$th discrete-time market corresponds to the partition of the interval $[0,T]$ into $N$ subintervals of the form $\left[{(k-1)T\over N}, {kT\over N}\right]$, $1\le k \le N$. 
Let $X^{(N)}_0=x_0$, and $X^{(N)}_k$ be a strictly positive discounted price of the asset at a time ${kT\over N}$ of $N$th discrete-time market, $1\le k \le N$.

Taking into account the multiplicative nature of the limit model, together with the assumption of independence of $Y$ and $W$ on $[0,T]$, we can assume that the ratio $X^{(N)}_{k}\over {X^{(N)}_{k-1}}$, $ 1\le k \le N$ can be represented as 
\begin{equation}\label{ratios}
     {X^{(N)}_{k} \over X^{(N)}_{k-1}} = \left(1+R^{(N)}_{k, Y^{(N)}_k}\right), \end{equation}
where the random variables $R^{(N)}_{k,j}$, $1\le k \le N$, $j\in \N$ are independent, $R^{(N)}_{k,j} > -1$ a.s., and $\left(Y^{(N)}_k\right)_{k\ge 0}$ is a discrete-time switching process independent of all $\left\{R^{(N)}_{k,j}, 1\le k\le N, j\in \N\right\}$.
Here $R^{(N)}_{k,j}$ represent a net profit rates generated by the price process on the time intervals $\left[{(k-1)T\over N},{kT\over N}\right]$, $1\le k \le N$ in a model with the switching.
Taking logarithms, we can write
\begin{equation}\label{prod_representation}
 U^{(N)}_k = \log(X^{(N)}_{k}) = \log(X_0) + \lsum_{j=1}^{k} \log\left(1 + R^{(N)}_{j, Y^{(N)}_j}\right), 
\end{equation}
where $ 1\le k \le N$ and $U^{(N)}_0 = \log(x_0)$.
We will use processes $\left(U^{(N)}_k\right)_{0\le k\le N}$  to approximate the process $\left(U_t\right)_{t\in [0,T]}$ defined in (\ref{ut_def}), and will find it convenient to define such discrete processes on a continuous time interval $[0,T]$ rather than a discrete set $\{0,1,\ldots, N\}$. To this end, we define a continuous-time processes
\begin{gather*}
Y^{(N)}_t = Y^{(N)}_{\left\lfloor {tN\over T} \right\rfloor}, U^{(N)}_t = U^{(N)}_{\left\lfloor {tN\over T} \right\rfloor},   X^{(N)}_t = X^{(N)}_{\left\lfloor {tN\over T} \right\rfloor}. \nonumber
\end{gather*}
Here, by $\lfloor a \rfloor$ we understand the biggest integer smaller or equal to $a$.
We assume that the process $\left(X^{(N)}_t\right)_{t\ge 0}$, and thus random variables $\left\{R^{(N)}_{i,j}, 1\le i\le N, j\in \N\right\}$ and $\left(Y^{(N)}_k\right)_{k\ge 0}$ are defined on a stochastic basis $\left(\Omega^{(N)}, \EuScript{F}^{(N)}, \left(\EuScript{F}^{(N)}_t\right)_{t\in [0,T]}, \prb^{(N)}\right)$,
where filtration is generated by the respective random variables $R^{(N)}_{k,j}, Y^{(N)}_k$
$$ \EuScript{F}^{(N)}_t = \sigma\left\{Y^{(N)}_k, R^{(N)}_{k,j}, 1\le k\le \left\lfloor{tN\over T}\right\rfloor,\ j\in \N\right\}, $$
so that  $X^{(N)}_t$, $Y^{(N)}_t$ and $U^{(N)}_t$ are $\EuScript{F}^{(N)}_t$-measurable.

Recall that the process $(Y_s)_{s\ge 0}$  is  the jump Markov process
with values in the set $\N$ and with infinitesimal matrix (\ref{intensity_matrix}). This process   governs the switching in continuous-time model.
Recall also, that state $i$ provides drift rate $\mu_i$. 
Once we consider a discrete-time model, we have to introduce a discrete-time switching process (note that in such a model the switching of the drift rate may only occur at a times $kT\over N$) which is the process $\left(Y^{(N)}_k\right)_{ k\ge 0}$ introduced above. This Markov chain takes values in the set $\N$, has initial value $Y^{(N)}_0=1$ and $Y^{(N)}_k=j$ implies that the intensity of the interest on $k$th interval of $N$th discrete-time market equals   $\mu_j$. 

The definition of the transition probability for the process $\left(Y^{(N)}_k\right)_{k\ge 0}$ follows from the requirement for 
occupation times of $\left(Y^{(N)}_k\right)_{k\ge 0}$ to be close  to that of $(Y_s)_{s\ge 0}$.  This leads to the following definition of transition probabilities of the chain $\left(Y^{(N)}_k\right)_{k\ge 0}$
for $i,j \in \N$:
\begin{align*}
\prb^{(N)}&\left(Y^{(N)}_{k+1} = i\ \middle|\ Y^{(N)}_k = i\right)=\prb\left(Y_s = i, {{Tk}\over N} \le s \le {{T(k+1)}\over N}\ \middle|\ Y_{Tk\over N} =i\right) \\
 &= \prb(Y_s = i, 0\le s \le T/N\ |\ Y_0 = i) = e^{- {\lambda_i T\over N}},
\end{align*}
\begin{align*}
p^{(N)}_{ij} = \prb^{(N)}&\left(Y^{(N)}_{k+1} = j\ \middle|\ Y^{(N)}_k = i\right) =\prb\left(Y_{T(k+1)\over N} = j\ \middle|\ Y_{Tk\over N} =i\right) \\
 &= \prb\left(Y_{T\over N} = j\ \middle|\ Y_0 = i\right) = p_{ij}\left({T\over N}\right),
\end{align*}
where we used the Markov property of the process $(Y_s)_{s\ge 0}$ and denoted the solution of Kolmogorov equations for $(Y_s)_{s\ge 0}$ by $p_{ij}(t)$. 
Such probabilities $p^{(N)}_{ij}$ define one-step transition probabilities matrix
\begin{equation}\label{yn_trans_probability}
    \left(
    \begin{array}{cccc}
    e^{-\lambda_1 {T\over N}}& p^{(N)}_{12}& p^{(N)}_{13}& \ldots\\
    p^{(N)}_{21}&  e^{-\lambda_2 {T\over N}}& p^{(N)}_{23}& \ldots\\
    \ldots
    \end{array}
    \right).
\end{equation}

\section{Weak convergence of discrete-time Markov chains to the limit Markov process}\label{markov_conv_section}
In this section, we prove that the sequence of processes $\left(Y^{(N)}\right)_{t\in [0,T]}$ introduced in Section \ref{discrete-time-definitions} converges in Skorokhod topology to the process $\left(Y_t\right)_{t\in [0,T]}$. 

Recall that Markov chain $\left(Y^{(N)}_k\right)_{k\ge 0}$,   introduced in Section \ref{discrete-time-definitions}, has initial value $Y^{(N)}_0=1$ and transition probability matrix (\ref{yn_trans_probability}). For this chain let us define the jump times
$$ \tau^{(N)}_0 = 0,\ \tau^{(N)}_n = \inf\left\{k > \tau^{(N)}_{n-1} : Y^{(N)}_k \neq Y^{(N)}_{\tau^{(N)}_{n-1}}\right\}, n \ge 1,$$
and occupation times
\begin{equation}\label{tauNk_def} 
\theta^{(N)}_0 = 0,\ \theta^{(N)}_k = \tau^{(N)}_k - \tau^{(N)}_{k-1}, k\ge 1.
\end{equation}
For a given $t\in [0,T]$ and integer $N$ define $k_{t,N}\in \{0, \ldots, N\}$ in the following way:  $k_{T,N} = N$, and for $t\in [0,T)$ we have $t\in \left[{k_{t,N} T\over N}, {(k_{t, N}+1)T\over N}\right).$ We will also use a notation
$t^{(N)} = {k_{t,N}T\over N}$.
Note, that we will use the symbol $N_T$ (previously introduced in (\ref{Nt_def})) to define the total number of jumps of the process $\left(Y^{(N)}_t\right)_{t\in [0,T]}$. It will be clear from the context if we mean the total number of jumps of the process $\left(Y_t\right)_{t\in [0,T]}$ or of the process $\left(Y^{(N)}_t\right)_{t\in [0,T]}$.

\begin{lem}\label{nt_exp_moment}
For every $\alpha > 0$
$$ \E e^{\alpha N_T} \le e^{-\lambda_* T} + \Lambda \exp\left(\alpha + e^\alpha\lambda^* T\right), $$
where
$ \lambda^* = \sup_i \lambda_i,\ \lambda_* = \inf_i \lambda_i$ are defined in (\ref{lambda_start_def}) and $\Lambda = {\lambda^* \over \lambda_*}.$
\end{lem}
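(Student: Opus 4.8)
The plan is to reduce everything to the exit rates $\lambda_i$ alone, since the number of jumps $N_T$ does not depend on the embedded transition probabilities $\lambda_{ij}/\lambda_i$ but only on the holding times. First I would record the identity $\{N_T \ge n\} = \{\tau_n < T\}$, where $\tau_n = \theta_1 + \dots + \theta_n$ and, conditionally on the embedded chain $\{y_k\}$, the holding times $\theta_k$ are independent with $\theta_k \sim \mathrm{Exp}(\lambda_{y_{k-1}})$. I would then split off the no-jump event: $\prb(N_T = 0) = e^{-\lambda_1 T} \le e^{-\lambda_* T}$, which already produces the first term of the asserted bound.

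The crux is to control $\prb(N_T \ge n)$ for $n \ge 1$ uniformly in the (random) states visited. Here I would exploit the uniform bound $\lambda_i \le \lambda^*$ from \eqref{lambda_start_def}: since a smaller rate yields a stochastically larger exponential, conditionally on $\{y_k\}$ each $\theta_k$ stochastically dominates an $\mathrm{Exp}(\lambda^*)$ variable, and, convolution preserving stochastic order for independent summands, $\tau_n$ conditionally dominates a $\mathrm{Gamma}(n,\lambda^*)$ law. Averaging over the embedded chain removes the conditioning and gives $\prb(N_T \ge n) = \prb(\tau_n < T) \le \prb(\Gamma_n < T)$ with $\Gamma_n \sim \mathrm{Gamma}(n,\lambda^*)$. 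By the Poisson--Erlang duality $\prb(\Gamma_n < T) = \prb(\Pi \ge n)$ for $\Pi \sim \mathrm{Poisson}(\lambda^* T)$, and the elementary tail estimate $\prb(\Pi \ge n) \le (\lambda^* T)^n / n!$ then yields a clean bound on each term.

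Finally I would assemble the moment. Using $\prb(N_T = n) \le \prb(N_T \ge n)$ for $n \ge 1$ and summing, $\E e^{\alpha N_T} = \sum_{n \ge 0} e^{\alpha n}\prb(N_T = n) \le e^{-\lambda_* T} + \sum_{n \ge 1} \frac{(e^\alpha \lambda^* T)^n}{n!} = e^{-\lambda_* T} + \bigl(e^{e^\alpha \lambda^* T} - 1\bigr)$. Since $\Lambda = \lambda^*/\lambda_* \ge 1$ and $e^\alpha \ge 1$, this is dominated by $e^{-\lambda_* T} + \Lambda\exp(\alpha + e^\alpha \lambda^* T)$; in fact the argument is slightly sharper, so any looseness in the intermediate estimates is harmless. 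The one delicate point is the stochastic-domination step, where the state-dependence of the holding times must be neutralized by the uniform rate bound; a fully equivalent and arguably cleaner alternative is the uniformization coupling, subordinating $(Y_t)$ to a rate-$\lambda^*$ Poisson process so that $N_T \le_{\mathrm{st}} \Pi_T$ directly and $\E e^{\alpha N_T} \le \exp(\lambda^* T(e^\alpha - 1))$.
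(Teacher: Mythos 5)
Your proof is correct, but it follows a genuinely different route from the paper's. The paper compares with the \emph{slowest} rate: conditionally on the embedded chain it bounds each exponential density by $\lambda_i e^{-\lambda_i x}\le \Lambda\,\lambda_* e^{-\lambda_* x}$, so the hypoexponential density of $\tau_m$ is at most $\Lambda^m$ times an $\mathrm{Erlang}(\lambda_*,m)$ density; the price is the factor $\Lambda^m$, and summing $e^{\alpha m}\Lambda^m(\lambda_* T)^{m-1}/(m-1)!$ produces exactly the constant $\Lambda\exp\left(\alpha+e^{\alpha}\lambda^* T\right)$ of the statement. You compare with the \emph{fastest} rate instead: since $\lambda_i\le\lambda^*$, each holding time stochastically dominates an $\mathrm{Exp}(\lambda^*)$ variable, domination passes to sums of (conditionally) independent summands, and the resulting bound $\prb(N_T\ge n)\le \prb(\Gamma_n<T)\le (\lambda^* T)^n/n!$, with $\Gamma_n\sim\mathrm{Gamma}(n,\lambda^*)$, is uniform in the realization of the embedded chain, so averaging out the conditioning is legitimate. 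This avoids the factor $\Lambda$ altogether and gives the sharper estimate $\E e^{\alpha N_T}\le e^{-\lambda_* T}+e^{e^{\alpha}\lambda^* T}-1$, which you correctly verify lies below the stated right-hand side because $\Lambda\ge 1$ and $e^{\alpha}\ge 1$; note that $\lambda_*$ and $\Lambda$ enter your argument only through this final comparison and through $\prb(N_T=0)=e^{-\lambda_1 T}\le e^{-\lambda_* T}$. Your closing uniformization remark is also sound: subordinating $Y$ to a rate-$\lambda^*$ Poisson process uses exactly the standing assumption $\lambda^*<\infty$ and yields the cleanest bound $\exp\left(\lambda^* T(e^{\alpha}-1)\right)$, making it transparent that finiteness of exponential moments of $N_T$ does not rely on $\lambda_*>0$ at all, whereas the paper's density-ratio route cannot dispense with it. A further benefit is that your domination argument transfers essentially verbatim to the discrete-time chains $Y^{(N)}$ (geometric holding times dominated by the rate-$\lambda^*$ comparison), which is precisely what the Remark following the lemma claims can be established ``in a similar way.''
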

\begin{proof}
Note that given $y_1=j_1,\ldots, y_m=j_m$, random variable $\tau_m$ is a sum of $m$ independent random variables $\{\theta_1, \ldots, \theta_m\}$ (both defined in (\ref{def_tau})), such that $\theta_i$ has an exponential distribution with parameter $\lambda_{j_i}$, so that $\tau_m$ has a hypoexponential distribution. Taking into account an estimate for the exponential densities:
$$ \lambda_i e^{-\lambda_i x} \le \left(\lambda_i\over \lambda_*\right)\lambda_* e^{-\lambda_* x} \le \Lambda \lambda_* e^{-\lambda_* x},\ x\ge 0, $$
we can estimate hypoexponential distribution function of $\tau_m$ by an Erlang distribution function with parameter $\lambda_*$ in the following way:
$$ F_{\tau_m}(t) = \prb\left(\tau_m < t\ |\ y_1=j_1,\ldots,y_m=j_m\right) \le \Lambda^m F_{\lambda_*, m}(t),$$
where 
 $F_{\lambda,m}(x) $ is a distribution function for Erlang distribution with parameters $\lambda, m$. It is well known that
$$ F_{\lambda, m}(x)  = {\int_0^{\lambda x} t^{m-1} e^{-t}dt \over (m-1)!}\le {(\lambda x)^{m-1}\over (m-1)!}.$$
We can now write
\begin{align*}
    \prb(N_T = m) &= \prb(\tau_m \le T < \tau_{m+1}) \le \prb(\tau_m \le T) \\
    &= \lsum_{j_1,\ldots,j_m} \prb(\tau_m \le T\ |\ y_1= j_1,\ldots, y_m= j_m) \prb(y_1 = j_1, \ldots, y_m=j_m) \\
    &\le \lsum_{j_1,\ldots,\_m} 
    \Lambda^m F_{\lambda_*, m}(T) \prb(y_1 = j_1,\ldots,y_m=j_m)\\
    &= \Lambda^m F_{\lambda_*, m}(T).
\end{align*}
Therefore
\begin{align*}
 \mathbb{E} e^{\alpha N_T} &\le e^{-\lambda_* T} +  \lsum_{m\ge 1} e^{\alpha m}\Lambda^m {\left(\lambda_* T \right)^{m-1} \over (m-1)!} = e^{-\lambda_* T} + \Lambda e^{\alpha} \lsum_{m\ge 0} {(e^{\alpha}\lambda^* T)^m \over m!} \\
&= e^{-\lambda_* T} + \Lambda \exp\left(\alpha + e^\alpha\lambda^* T\right). 
\end{align*}
\end{proof}
\begin{remark}
Note that the bound
$$ \sup_N \mathbb{E}^{(N)} e^{\alpha N_T} < \infty,$$
can be established in a similar way.
\end{remark}

\begin{thm}\label{approx_y_thm1}
Denote by $f_m(t_1, \ldots, t_m)$ a conditional density of $(\tau_1, \ldots, \tau_m)$ (defined in (\ref{def_tau})) given $N_T = m$ and put
$$ g_m(t_1, \ldots, t_m) =f_m(t_1, \ldots, t_m) \prb(N_T=m). $$
Then the following equation holds true for all $0\le t_1 < \ldots < t_m \le T$:
$$ \lim_{N\to \infty} \left({N\over T}\right)^{m+1} \hat{p}_N(k_{t_1, N}, \ldots, k_{t_m, N}) = g_m(t_1, \ldots, t_m), $$
where 
$\hat{p}_N(k_{t_1,N}, \ldots, k_{t_m, N}) =\prb^{(N)}\left\{ \tau^{(N)}_1 = k_{t_1,N}, \ldots, \tau^{(N)}_m = k_{t_m,N},\ \tau^{(N)}_{m+1}>N\right\}$,\\
and $\{\tau^{(N)}_k, k\ge 1\}$ are defined in  (\ref{tauNk_def}).
\end{thm}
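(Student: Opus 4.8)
The plan is to prove this local-limit statement by direct computation: express $\hat p_N$ through the one-step transition matrix \eqref{yn_trans_probability}, and then pass to the limit summand by summand. First I would condition on the trajectory of the embedded chain. Writing $j_0 = 1$ and $j_1, \ldots, j_m \in \N$ (with $j_i \neq j_{i-1}$) for the successive states visited, and abbreviating $n_i = k_{t_i,N}$ with $n_0 = 0$, the event $\{\tau^{(N)}_1 = n_1, \ldots, \tau^{(N)}_m = n_m,\ \tau^{(N)}_{m+1} > N\}$ forces the chain to remain in $j_{i-1}$ throughout the block of indices $[n_{i-1}, n_i)$, to jump to $j_i$ at step $n_i$, and finally to stay in $j_m$ until step $N$. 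Reading off the diagonal entries $e^{-\lambda_i T/N}$ for the sojourns and the off-diagonal entries $p_{ij}(T/N)$ for the jumps, this gives
\begin{equation*}
\hat p_N = \sum_{j_1, \ldots, j_m} \left[\prod_{i=1}^m \left(e^{-\lambda_{j_{i-1}} T/N}\right)^{n_i - n_{i-1} - 1} p_{j_{i-1}, j_i}(T/N)\right]\left(e^{-\lambda_{j_m} T/N}\right)^{N - n_m}.
\end{equation*}

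Next I would extract the asymptotics of each factor. Since $k_{t,N} T/N \to t$, the sojourn exponents satisfy $(T/N)(n_i - n_{i-1} - 1) \to t_i - t_{i-1}$ and $(T/N)(N - n_m) \to T - t_m$, so the sojourn factors tend to $e^{-\lambda_{j_{i-1}}(t_i - t_{i-1})}$ and $e^{-\lambda_{j_m}(T - t_m)}$; and the Kolmogorov equations give $p_{ij}(h) = \lambda_{ij} h + o(h)$, so that each jump factor, once rescaled by $N/T$, obeys $(N/T) p_{j_{i-1}, j_i}(T/N) \to \lambda_{j_{i-1}, j_i}$. Hence every summand, after multiplying by one factor of $N/T$ per jump, converges to $\prod_{i=1}^m \lambda_{j_{i-1}, j_i} e^{-\lambda_{j_{i-1}}(t_i - t_{i-1})} \cdot e^{-\lambda_{j_m}(T - t_m)}$. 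I would then check that summing these limits over $j_1, \ldots, j_m$ reproduces $g_m$: on the continuous side, conditioning on the embedded chain makes the occupation times $\theta_i$ independent $\mathrm{Exp}(\lambda_{j_{i-1}})$ variables, the embedded chain moves $j_{i-1} \to j_i$ with probability $\lambda_{j_{i-1}, j_i}/\lambda_{j_{i-1}}$, and the constraint $N_T = m$ contributes the no-further-jump tail $e^{-\lambda_{j_m}(T - t_m)}$; assembling these factors gives exactly the same series, so the identification of the limit with $g_m$ is a routine computation.

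The main obstacle is justifying the interchange of $\lim_N$ with the infinite sum over the embedded states $j_1, \ldots, j_m \in \N$, which is where the countable state space matters. I would control this by a generalized dominated convergence argument. Bounding every sojourn factor by $1$ reduces the question to the rescaled jump products $\prod_{i=1}^m (N/T) p_{j_{i-1}, j_i}(T/N)$, and the crude estimate $p_{ii}(h) \ge e^{-\lambda_i h} \ge 1 - \lambda_i h$ yields $\sum_{j \neq i} p_{ij}(h) = 1 - p_{ii}(h) \le \lambda_i h$, hence $\sum_{j \neq i} (N/T) p_{ij}(T/N) \le \lambda_i \le \lambda^* < \infty$ uniformly in $N$ and $i$ by \eqref{lambda_start_def}; consequently the full $m$-fold sum is bounded by $(\lambda^*)^m$. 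To turn this uniform bound into convergence of the series I would use $(N/T)(1 - p_{ii}(T/N)) \to \lambda_i$, so that the innermost marginal sums converge to $\lambda_{j_{m-1}} = \sum_{j_m \neq j_{m-1}} \lambda_{j_{m-1}, j_m}$, and then apply a Pratt-type dominated convergence theorem recursively through the $m$ nested summations, the uniform bound $\lambda^*$ supplying the domination at each level. Making this recursion genuinely uniform across the layers is the delicate point, but the two-sided control $0 < \lambda_* \le \lambda_i \le \lambda^* < \infty$ keeps all of the series and their limits summable, so the recursion closes and the termwise limit may be summed.
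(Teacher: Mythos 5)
Your proposal is correct and follows the same overall decomposition as the paper: expand $\hat p_N$ over trajectories $(j_1,\ldots,j_m)$ of the embedded chain into a nested sum of sojourn factors and rescaled jump factors, take termwise limits from Chung's results $\lim_{h\to 0} p_{ij}(h)/h=\lambda_{ij}$ and $p_{ij}(h)\le \lambda_i h$, and identify the limiting series with $g_m$ via the exponential holding-time representation of the continuous-time chain. (In passing: like the paper's own proof, you establish the limit with the scaling $(N/T)^m$ --- one factor of $N/T$ per jump --- which confirms that the exponent $m+1$ in the statement is a typo; the proof in the paper also works with $(N/T)^m$.) The one place where you genuinely depart from the paper is the justification for interchanging $\lim_N$ with the infinite sums over $j_1,\ldots,j_m$, which is indeed the only delicate step. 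The paper isolates this in Lemma \ref{unif_conv_mark_chain}: uniform convergence in $h$ of the series $S_{1,i,m}(h)$, $S_{2,i,m}(h)$, obtained from Dini's theorem in the case $m=1$ and an induction on $m$ driven by the bound $\sup_i (1-p_{ii}(h))/h \le 2\lambda^*$. You instead run a Scheff\'e/Pratt dominated-convergence recursion through the $m$ layers, using that the row sums $\sum_{j\neq i}(N/T)\,p_{ij}(T/N)=(N/T)\bigl(1-p_{ii}(T/N)\bigr)$ converge to $\lambda_i=\sum_{j\neq i}\lambda_{ij}$ (conservativeness of $\mathbb{A}$) together with the uniform bound $\lambda_i\le\lambda^*$. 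This recursion does close: at the $k$-th layer the dominating sequence $\lambda^{*(k-1)}(N/T)\,p_{ij}(T/N)$ converges pointwise and its sums converge to the sum of its limits, which is exactly the hypothesis of Pratt's lemma, while the sojourn factors (bounded by $1$ and converging pointwise) ride along inside the dominated terms. Both devices rest on the same two-sided control $0<\lambda_*\le\lambda_i\le\lambda^*<\infty$; yours is arguably more elementary --- purely sequential, needing neither uniform convergence nor Dini --- whereas the paper's lemma buys reusability, being invoked again verbatim in the proof of Theorem \ref{approx_y_thm2}.
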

\begin{proof} 
Let $\tilde f_{j_1,\ldots,j_m}(u_1, \ldots, u_{m})$ be a conditional density of $(\theta_1,\ldots, \theta_{m})$ given $N_T=m$ and $y_i=j_i,\ 1\le i\le m$ (where $y_i$ defined in (\ref{emb_chain_def})),
so that
\begin{equation}
f_m(t_1,\ldots, t_m) = \mathbb{E}[\tilde f_{y_1,\ldots, y_m}(t_1, t_2-t_1,\ldots, t_m-t_{m-1})],
\end{equation}
and let us introduce
$$\tilde g_{j_1,\ldots,j_m}(u_1,\ldots, u_{m}) = \tilde f_{j_1,\ldots,j_m}(u_1, \ldots, u_{m}) \prb(N_T =m). $$

 Since $\theta_1, \theta_2,\ldots, \theta_{m}$ are independent random variables with exponential distributions we can write
\begin{align*}
\tilde g_{j_1,\ldots,j_m}(u_1, \ldots, u_{m}) &= \lim_{h\to 0} {1\over (2h)^{m}} \prb\left(|\theta_j - u_j| < h, N_T = m,\ 1\le j\le m\right) \\
&= \prb(\theta_{m+1} > T-(u_1+\ldots + u_{m})) \lim_{h\to 0}  \prod\limits_{j=1}^{m} \left({1\over 2h} \prb(|\theta_j-u_j| < h)\right) \\
&= \prod\limits_{i=0}^{m-1} \left(\lambda_{j_i} e^{-\lambda_{j_i} u_{i+1}}\right) e^{-\lambda_{j_m} (T-(u_1 + \ldots + u_{m}))},
\end{align*}
where $j_0=1$ and $u_j \ge 0$, $1\le j \le m$ such that $u_1 + \ldots + u_{m} \le T$.

Recall that $\tau_0=0$ and $\theta_j = \tau_{j} - \tau_{j-1}$, $1\le j \le m$. So, for all $0 = t_0 \le t_1 < \ldots < t_{m} \le t_{m+1} = T$
\begin{align*}
g_{m}&(t_1, \ldots, t_{m}) = \mathbb{E}\tilde g_{y_1,\ldots,y_m}(t_1, t_2-t_1, \ldots, t_{m} - t_{m-1}) \\
&= \mathbb{E}\prod\limits_{i=0}^{m-1} \left(\lambda_{y_i} e^{-\lambda_{y_i} (t_{i+1}-t_i)}\right) e^{-\lambda_{y_m} (T-t_m)}\\
&= \mathbb{E}\left[\left(\prod\limits_{i=0}^{m-1} \lambda_{y_i}\right) \exp\left(-\lsum_{i=0}^{m} \lambda_{y_i} (t_{i+1}-t_i) \right) \right].
\end{align*}

To simplify the further derivations, let us omit indices in $k_{t_i, N}$ and simply write $k_i$.
Then we can rewrite $\hat{p}_N(k_1, \ldots, k_m)$ as 
\begin{gather*}
    \hat{p}_N(k_1, \ldots, k_{m}) = \lsum_{j_1,\ldots, j_m}
    \left(\prod\limits_{i=0}^{m-1} e^{-\lambda_{j_i} {(k_{i+1} - k_i)T\over N}}p^{(N)}_{j_i j_{i+1}}\right) e^{-\lambda_{j_m}\left(T - k_m {T\over N}\right)},
\end{gather*}
where $j_0 = Y_0 = 1$ is a fixed initial value.
From \cite{chung}, Chapter II.3 (p.130), and Theorem 1 at the same page, we know that
\begin{equation}\label{pin_maj}
\lim_{N\to \infty} \left({N\over T}\right)p^{(N)}_{ij} = \lambda_{ij},\ \mbox{and}\ p^{(N)}_{ij} \le \lambda_i\left({T\over N}\right),\ i\neq j.
\end{equation}
Thus, we have
\begin{equation}\label{dom_conv}
    \left({N\over T}\right)^m\hat{p}_N(k_1, \ldots, k_{m}) = \lsum_{j_1,\ldots, j_m}\left(\prod\limits_{i=0}^{m-1} e^{-\lambda_{j_i} {(k_{i+1} - k_i)T\over N}} p^{(N)}_{j_i j_{i+1}} {N\over T}\right) e^{-\lambda_{j_m}\left(T - k_m {T\over N}\right)},
\end{equation}
where $\lsum_{j_1,\ldots,j_m}$ should be understood as 
$\lsum_{\substack{j_1 \in \N\\ j_1\neq 1}}\lsum_{\substack{j_2 \in \N\\ j_1\neq j_1}}\ldots \lsum_{\substack{j_m \in \N\\ j_m\neq j_{m-1}}}.$
Using Lemma \ref{unif_conv_mark_chain} we can go to the limit 
\begin{align*}
    \lim_{N\to \infty} &\left({N\over T}\right)^m\hat p_N(k_1, \ldots, k_m) \\
    &= \lsum_{j_1,\ldots,j_m}\left[ \exp\left(-\lsum_{i=0}^{m-1} \lambda_{j_i} (t_{i+1} - t_i)\right)\prod\limits_{i=0}^{m-1} \lambda_{j_i j_{i+1}}\right]e^{-\lambda_{j_m}(T-t_m)}\\
    &= \lsum_{j_1,\ldots, j_m}\left[ \exp\left(-\lsum_{i=0}^{m-1} \lambda_{j_i} (t_{i+1} - t_i)\right)\left(\prod\limits_{i=0}^{m-1} {\lambda_{j_i, j_{i+1}}\over \lambda_{j_i}}\right) \prod\limits_{i=0}^{m-1} \lambda_{j_i}\right] e^{-\lambda_{j_m}(T-t_m)} \\
    &= \E\left[\exp\left(-\lsum_{i=0}^{m} \lambda_{y_i} (t_{i+1} - t_i)  \right)\prod\limits_{i=0}^{m-1} \lambda_{y_i} \right] = g_m(t_1, \ldots, t_m),
\end{align*}
where $t_0=0, t_{m+1}=T$ as before.
Theorem is proved.
\end{proof}

\begin{thm}\label{approx_y_thm2}
  Let $0 \le t_0 < t_1 < \ldots < t_n \le T$ be fixed. Then for any $\varepsilon > 0$ there exists such $N(n, \varepsilon)$ that for all $N\ge N(n, \varepsilon)$ we have
  \begin{equation}\label{finite_dim_distributions}
    \left| \prb\left(Y_{t_i} = x_i, 0\le i\le n\right) - \prb^{(N)}\left(Y^{(N)}_{t^{(N)}_i} = x_i, 0\le i\le n\right)\right| < \varepsilon,
  \end{equation}
  where $x_i \in \N$, $0\le i \le n$.
\end{thm}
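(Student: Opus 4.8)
The plan is to use the Markov property of both processes to factor their finite-dimensional distributions through transition probabilities, and then to compare the discrete multi-step transitions with the continuous transition function. Writing $p_{ij}(t)$ for the transition function of the limit process and using $Y_0 = Y^{(N)}_0 = 1$, the Markov property gives
\begin{equation*}
\prb\left(Y_{t_i} = x_i,\ 0\le i\le n\right) = p_{1,x_0}(t_0)\prod_{i=1}^{n} p_{x_{i-1},x_i}(t_i - t_{i-1}),
\end{equation*}
and, denoting by $P^{(N)}_{ab}(m)$ the $(a,b)$ entry of the $m$th power of the one-step matrix \eqref{yn_trans_probability} and by $k_i = k_{t_i,N}$, the identity $Y^{(N)}_{t^{(N)}_i} = Y^{(N)}_{k_i}$ together with the Markov property of the chain gives
\begin{equation*}
\prb^{(N)}\left(Y^{(N)}_{t^{(N)}_i} = x_i,\ 0\le i\le n\right) = P^{(N)}_{1,x_0}(k_0)\prod_{i=1}^{n} P^{(N)}_{x_{i-1},x_i}(k_i - k_{i-1}).
\end{equation*}
Both right-hand sides are products of $n+1$ factors, each lying in $[0,1]$, so a routine telescoping estimate reduces \eqref{finite_dim_distributions} to the factorwise claim that $P^{(N)}_{x_{i-1},x_i}(k_i - k_{i-1}) \to p_{x_{i-1},x_i}(t_i - t_{i-1})$ for each $i$.

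The core step is therefore to show that for fixed states $a,b\in\N$ and integers $m_N$ with $\frac{T}{N}m_N \to s$ one has $P^{(N)}_{ab}(m_N) \to p_{ab}(s)$; applied with $a=x_{i-1}$, $b=x_i$ and $m_N = k_i - k_{i-1}$ this is exactly what is needed, because $\frac{T}{N}(k_i - k_{i-1}) = t^{(N)}_i - t^{(N)}_{i-1} \to t_i - t_{i-1}$ (recall $|t^{(N)}_i - t_i|\le T/N$). To prove it I would regard the $m$-step matrix as a discretized Markov semigroup. Setting $A_N = \frac{N}{T}\bigl(P^{(N)} - I\bigr)$, so that $P^{(N)} = I + \frac{T}{N}A_N$, the relation \eqref{pin_maj} shows that the off-diagonal entries of $A_N$ converge to $\lambda_{ij}$, while the diagonal entries $\frac{N}{T}\bigl(e^{-\lambda_i T/N} - 1\bigr)$ converge to $-\lambda_i$; hence $A_N \to \mathbb{A}$ entrywise. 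Since $\lambda^* = \sup_i \lambda_i < \infty$ by \eqref{lambda_start_def}, the generator $\mathbb{A}$ is a bounded operator and the limit semigroup $P(s) = e^{s\mathbb{A}}$ is uniformly continuous in $s$, so $\bigl(I + \frac{T}{N}A_N\bigr)^{m_N} \to e^{s\mathbb{A}} = P(s)$, which is precisely $P^{(N)}_{ab}(m_N) \to p_{ab}(s)$.

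Combining the two displays finishes the argument: by the previous step each of the $n+1$ discrete factors converges to the corresponding continuous factor $p_{x_{i-1},x_i}(t_i - t_{i-1})$ (the case $t_0=0$ being the trivial $p_{1,x_0}(0) = \ind\{x_0=1\}$), and since all factors are bounded by $1$ the telescoping bound turns this factorwise convergence into convergence of the full products. Choosing $N(n,\varepsilon)$ so large that each of the $n+1$ factor discrepancies is below $\varepsilon/(n+1)$ then yields \eqref{finite_dim_distributions}. One could alternatively route the computation through the jump-time densities of Theorem~\ref{approx_y_thm1}, but this would require re-tracking the embedded states that $g_m$ integrates out, so the direct Markov factorization seems cleaner.

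The step I expect to be the main obstacle is the semigroup convergence $P^{(N)}_{ab}(m_N)\to p_{ab}(s)$ on the countable state space: one is composing $m_N \asymp N$ infinite matrices, and the per-step discrepancy between the one-step matrix \eqref{yn_trans_probability} and the exact time-$\tfrac{T}{N}$ kernel $\bigl(p_{ij}(T/N)\bigr)$ — the diagonal is $e^{-\lambda_i T/N}$ rather than $p_{ii}(T/N)$ — is only of order $(T/N)^2$, so the error accumulated over $m_N$ steps must be shown to vanish. Controlling this uniformly over the infinitely many states is exactly where the two-sided bound $0<\lambda_*\le\lambda^*<\infty$ and the uniform convergence furnished by Lemma~\ref{unif_conv_mark_chain} are indispensable; without boundedness of the generator the interchange of the limit with the matrix product would not be justified.
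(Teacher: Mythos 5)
Your overall route is genuinely different from the paper's and the reduction itself is sound: factoring both finite-dimensional distributions through the Markov property into products of transition probabilities, and telescoping over factors bounded by $1$, correctly reduces \eqref{finite_dim_distributions} to the single claim that $P^{(N)}_{ab}(m_N)\to p_{ab}(s)$ whenever $\tfrac{T}{N}m_N\to s$. (The paper instead truncates on the number of jumps $\{N_T\le m\}$, compares the joint laws of the jump times via Riemann sums converging to the exponential densities of Theorem~\ref{approx_y_thm1}'s type, and closes an induction with the strong Markov property.) The genuine gap is in your justification of that single claim. Entrywise convergence $A_N\to\mathbb{A}$ together with boundedness of $\mathbb{A}$ does \emph{not} imply $\bigl(I+\tfrac{T}{N}A_N\bigr)^{m_N}\to e^{s\mathbb{A}}$ entrywise on a countably infinite state space: mass can flow out through states whose index grows with $N$ and flow back, which no finite collection of entries detects. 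Concretely, take $(A_N)_{ii}=-1$ for all $i$, $(A_N)_{1,N}=(A_N)_{N,1}=1$, and $(A_N)_{i,i+1}=1$ for the remaining rows; then $A_N$ converges entrywise to a bounded matrix $A$ whose first row is $(-1,0,0,\dots)$, so $\left(e^{sA}\right)_{11}=e^{-s}$, while $\{1,N\}$ is a closed two-state cycle for every $N$, giving $\bigl(I+\tfrac{1}{N}A_N\bigr)^N_{11}\to\tfrac12\left(1+e^{-2}\right)\neq e^{-1}$. Your matrices are better behaved than this example (the limit generator is conservative, and each fixed row of $A_N$ converges in $\ell^1$ by a Scheff\'e argument, since $\tfrac{N}{T}\bigl(1-p_{ii}(T/N)\bigr)\to\lambda_i=\sum_{j\neq i}\lambda_{ij}$), but converting that into convergence of the $m_N$-fold products still needs a uniform-in-states tail control that you never supply; flagging it as ``the main obstacle'' and calling Lemma~\ref{unif_conv_mark_chain} ``indispensable'' is not a proof --- and that lemma concerns uniform convergence of series of ratios $p_{ij}(h)/h$, not powers of the one-step matrix.

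The good news is that your own observation about the $O\bigl((T/N)^2\bigr)$ per-step discrepancy can be executed directly, bypassing generators altogether. Let $h=T/N$ and let $Q^{(N)}=\bigl(p_{ij}(h)\bigr)_{i,j}$ be the exact time-$h$ kernel; it differs from the matrix \eqref{yn_trans_probability} only on the diagonal, and since a discrepancy there requires at least two jumps within time $h$,
\[
\sup_i \left( p_{ii}(h)-e^{-\lambda_i h} \right) \le \left(1-e^{-\lambda^* h}\right)^2 \le (\lambda^* h)^2,
\]
uniformly in $i$ by \eqref{lambda_start_def}. Both matrices are (sub)stochastic, so in the submultiplicative norm $\|M\|=\sup_i\sum_j|M_{ij}|$ the identity $P^m-Q^m=\sum_{k=0}^{m-1}P^k(P-Q)Q^{m-1-k}$ yields
\[
\left\| \left(P^{(N)}\right)^{m_N}-\left(Q^{(N)}\right)^{m_N} \right\| \le m_N (\lambda^* h)^2 \le \frac{(\lambda^*)^2 T^2}{N}\to 0,
\]
while Chapman--Kolmogorov gives exactly $\left(Q^{(N)}\right)^{m_N}_{ab}=p_{ab}(m_N h)\to p_{ab}(s)$ by continuity of the transition function. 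This proves your key step rigorously (indeed uniformly over states), after which your telescoping over the $n+1$ factors finishes the theorem; with this repair your argument is shorter and more transparent than the paper's, but without it the central analytic step is asserted rather than proved.
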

\begin{proof}
For every fixed $\varepsilon>0$
we can find $m$ such that $\prb(N_T > m) + \prb^{(N)}(N_T > m) < \varepsilon/2$. Then (\ref{finite_dim_distributions}) is reduced to
  \begin{equation}\label{finite_dim_distributions_1}
    \left| \prb\left(Y_{t_i} = x_i, 0\le i \le n, N_T \le m\right) - \prb^{(N)}\left(Y^{(N)}_{t^{(N)}_k} = x_i, 0\le i \le n, N_T\le m\right)\right| < \varepsilon/2.
  \end{equation}

  Let us introduce the  random variables $r_j$ such that
  \begin{equation}
     \tau_{r_j} \le t_j < \tau_{r_j+1}.
  \end{equation}
  In fact,  $r_j$ is the number of the occupation interval that covers the fixed point $t_j$. Note  that $r_j$ is defined on the same probability space as $(Y_s)_{s\ge 0}$, and for $\omega \in \{N_T \le m\}$ each $r_k(\omega)$ takes
  value in the set $\{0,1,\ldots, m\}$. Put 
  $$A^m_n = \{(r_0(\omega), \ldots, r_n(\omega)),\, \omega \in \{N_T < m\}\} \subset \mathbb{R}^{n+1}. $$
  It is clear that $A^m_n$ is a finite set, and $ |A^m_n| \le m^{n+1}. $\\
For every $r=(r_1,\ldots, r_n) \in A^m_n$ let us define
  \begin{gather}\label{br_def}
   B_r = \{\tau_{r_j} \le t_j < \tau_{r_j+1}, 0\le j\le n, N_t\le m \},\\
   B^{(N)}_r = \{\tau^{(N)}_{r_j} \le t_j < \tau^{(N)}_{r_j+1}, 0\le j\le n, N_t\le m \}. \nonumber
\end{gather} 
Then we get the following equalities 
$$ \prb\left(Y_{t_i} = x_i, 0\le i\le n, N_T\le m\right) =\lsum_{r\in A^m_n} \prb\left(Y_{\tau_i} = x_i, 0\le i\le n, B_r\right), $$
and
$$ \prb^{(N)}\left(Y^{(N)}_{t^{(N)}_i} = x_i, 0\le i\le n, N_T\le m\right) =\lsum_{r\in A^m_n} \prb^{(N)}\left(Y^{(N)}_{\tau^{(N)}_i} = x_i, 0\le i\le n, B^{(N)}_r\right). $$  
Let us show, that for any fixed $r\in A^m_n$ the following convergence hold
\begin{equation}\label{ytaun_conv}
\lim_{N\to \infty} \prb^{(N)}\left(Y^{(N)}_{\tau^{(N)}_i} = x_i, 0\le i\le n, B^{(N)}_r\right) = \prb\left(Y_{\tau_i} = x_i, 0\le i\le n, B_r\right).
\end{equation}
Since $A^m_n$ is a finite set, this will prove (\ref{finite_dim_distributions_1}) and the statement of the theorem follows.
We will show (\ref{ytaun_conv}) by induction in $n$. We start with the case $n=1$. Let $t\in [0,T]$, $r\in\{1,\ldots, m\}$ and $j_0,j_r\in \N$ be some fixed numbers. Put $s_0=0$, and assume that $\lsum_i^j = 0,\ i>j$ by convention.
Then we can write
\begin{align}\label{fin_dim_lim}
 \lim_{N\to \infty} &\mathbb{P}^{(N)}\left(Y^{(N)}_{\tau^{(N)}_r} = j_r, \tau^{(N)}_r \le t < \tau^{(N)}_{r+1}\ \middle|\ Y^{(N)}_0=j_0\right) \nonumber \\
 &= \lim_{N\to \infty}\lsum_{s_1=1}^{k_{t,N}}\lsum_{s_2=s_1 + 1}^{k_{t,N}}\ldots \lsum_{s_r=s_{r-1}+1}^{k_{t,N}} \lsum_{j_1,\ldots, j_{r-1}} \left(\prod\limits_{k=0}^{r-1} e^{-\lambda_{j_k} (s_{k+1}-s_{k}-1){T\over N}}p^{(N)}_{j_k j_{k+1}}\right)e^{-\lambda_{j_r}(k_{t,N}-s_r){T\over N}} \nonumber \\
 &= \lim_{N\to \infty}\int_0^t \int_{u_1}^t \ldots \int_{u_{r-1}}^t  \lsum_{j_1,\ldots, j_{r-1}} {f^{(N)}_{j_0,\ldots,j_r}(\bar u)\over (T/N)^r} \prod\limits_{k=0}^{r-1} p^{(N)}_{j_{k-1}j_k}d \bar u \nonumber \\
&= \lim_{N\to \infty}\int_0^t \int_{u_1}^t \ldots \int_{u_{r-1}}^t  \lsum_{j_1,\ldots, j_{r-1}} f^{(N)}_{j_0,\ldots,j_r}(\bar u) \prod\limits_{k=0}^{r-1} \left(p^{(N)}_{j_{k-1}j_k} {N\over T}\right)d \bar u, 
  \end{align}
where $\bar u = (u_1,\ldots, u_r)$, and $f^{(N)}_{j_0,\ldots,j_r}(\bar u)$ is a function, that is constant on every hypercube $C^{(N)}(s_1,\ldots, s_r) =  \left[{(s_1-1) T\over N}, {s_1T\over N}\right) \times \ldots \times \left[{(s_r-1) T\over N}, {s_rT\over N}\right)$, where $1\le s_1 < s_2 < \ldots < s_r \le k_{t,N}$ are integers. 
$$ f^{(N)}_{j_0,\ldots, j_r}(\bar u) = \left(\prod\limits_{k=0}^{r-1} e^{-\lambda_{j_k}(s_{k+1}-s_k-1){T\over N}}\right)e^{-\lambda_{j_r}(k_{t,N}-s_r){T\over N}},\ \bar u \in C^{(N)}(s_1,\ldots, s_k),$$
and $f^{(N)}_{j_0,\ldots, j_r}(\bar u)=0$ otherwise. Clearly, 
$$\lim_{N\to \infty} f^{(N)}_{j_0,\ldots,j_r}(u_1,\ldots, u_r) = \left(\prod\limits_{k=0}^{r-1} e^{-\lambda_{j_k}(u_{k+1} - u_k)}\right)e^{-\lambda_{j_r}(t-u_r)},$$ where $u_0=0$. Since the expression under integrals in (\ref{fin_dim_lim}) is bounded we can exchange $\lim$ and integrals. By Lemma \ref{unif_conv_mark_chain} we can exchange $\lim$ and $\sum_{j_0,\ldots, j_r}$. Thus, we have
\begin{align*}
 \lim_{N\to \infty} &\mathbb{P}^{(N)}\left(Y^{(N)}_{\tau^{(N)}_r} = j_r, \tau^{(N)}_r \le t < \tau^{(N)}_{r+1}\ \middle|\ Y^{(N)}_0=j_0\right) \\
 &= \int_{u_1}^t \ldots \int_{u_{r-1}}^t  \lsum_{j_1,\ldots, j_{r-1}} \lim_{N\to \infty} \left(f^{(N)}_{j_0,\ldots,j_r}(\bar u) \prod\limits_{k=0}^{r-1} \left(p^{(N)}_{j_k j_{k+1}} {N\over T}\right)\right)d \bar u \\
 &= \int_{u_1}^t \ldots \int_{u_{r-1}}^t  \lsum_{j_1,\ldots, j_{r-1}} \left(\prod\limits_{k=0}^{r-1} e^{-\lambda_{j_k}(u_{k+1}-u_k)}\lambda_{j_k j_{k+1}}\right) e^{-\lambda_{j_r} (t-u_r)}d \bar u \\
 &= \prb\left(Y_{\tau_r} = j_r, \tau_r \le t < \tau_{r+1}\ \middle|\ Y_0=j_0\right),
\end{align*}
where $u_0=0, u_{k+1}=t$. Thus, we proved the base of induction. The step of induction now follows directly from the strong Markov property: 
\begin{align*}
 \prb^{(N)}&\left(Y^{(N)}_{\tau^{(N)}_i} = x_i, 0\le i\le n, B^{(N)}_r\right)\\
 &= \prb^{(N)}\left(Y^{(N)}_{\tau^{(N)}_i} = x_i, 0\le i\le n-1, B^{(N)}_{r-1}\right) \prb^{(N)}\left(Y^{(N)}_{\tau^{(N)}_1} = j_r\ \middle|\ Y^{(N)}_0 = j_{r-1}\right).
\end{align*}
The result now follows from the proven basis of induction and the assumption about the induction step. The theorem is proved.
\end{proof}

\begin{thm}\label{YN_conv}
  Processes $\left(Y^{(N)}_t\right)_{t\in [0,T]}$ converge in Skorokhod topology to $(Y_t)_{t\in [0,T]}$, as $N\to \infty $ .
\end{thm}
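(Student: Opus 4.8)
The plan is to establish weak convergence in the Skorokhod space $D([0,T])$ (with the $J_1$ topology) by the classical two-step scheme: first convergence of finite-dimensional distributions, then tightness (relative compactness) of the laws $\bigl\{\prb^{(N)}\circ (Y^{(N)})^{-1}\bigr\}_N$, and finally identification of the limit as $(Y_t)_{t\in[0,T]}$. This is precisely the form in which the criteria of \cite{gikhman-skorokhod-stoch-proc} for weak convergence of Markov processes are cast, so once both ingredients are in hand the theorem follows at once.

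For the finite-dimensional distributions, fix $0\le t_0 < \cdots < t_n \le T$. Since $Y^{(N)}_t = Y^{(N)}_{t^{(N)}}$ is piecewise constant along the grid, one has $Y^{(N)}_{t_i} = Y^{(N)}_{t^{(N)}_i}$, so Theorem \ref{approx_y_thm2} already yields
$$ \prb^{(N)}\bigl(Y^{(N)}_{t_i}=x_i,\ 0\le i\le n\bigr) \xrightarrow[N\to\infty]{} \prb\bigl(Y_{t_i}=x_i,\ 0\le i\le n\bigr) $$
for every $(x_0,\ldots,x_n)\in\N^{n+1}$, which, the state space being discrete, is convergence in distribution of the vector $(Y^{(N)}_{t_0},\ldots,Y^{(N)}_{t_n})$. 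Moreover, the jump times of $Y$ have continuous (hypoexponential) laws, so $\prb(Y\text{ jumps at }t)=0$ for each fixed $t$; hence every deterministic instant is almost surely a continuity point of the limit, and the above convergence over all finite tuples gives a convergence-determining family that contains the endpoint $T$.

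The real work lies in \emph{tightness}, which I would split into two uniform-in-$N$ estimates. First, \emph{compact containment}: because $\N$ is not locally compact, I must exhibit, for every $\varepsilon>0$, a finite set $K_\varepsilon\subset\N$ with $\inf_N \prb^{(N)}\bigl(Y^{(N)}_t\in K_\varepsilon\ \forall t\in[0,T]\bigr)\ge 1-\varepsilon$. The uniform exponential moment bound $\sup_N \E^{(N)} e^{\alpha N_T}<\infty$ from the Remark after Lemma \ref{nt_exp_moment} makes the number of switches tight, while the uniform ergodicity of $Y$ together with the convergence of the embedded transition probabilities (and the estimate $p^{(N)}_{ij}\le \lambda_i(T/N)$ from \eqref{pin_maj}) prevents the chain from escaping to infinity, yielding the finite-state confinement. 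Second, \emph{non-clustering of jumps}: I would verify Aldous' criterion, equivalently control the Skorokhod modulus $w'(Y^{(N)},\delta)$, by showing that the probability that $Y^{(N)}$ switches twice within time-distance $\delta$ is uniformly small as $\delta\to0$. This rests on Theorem \ref{approx_y_thm1}: the scaled jump times $\tau^{(N)}_k T/N$ converge in law to the jump times $\tau_k$ of $Y$, whose gaps $\theta_k$ are exponential and hence almost surely strictly positive, while the jump count is tight, so the switches become asymptotically well separated.

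With both pieces established, relative compactness guarantees that every subsequence of $(Y^{(N)})$ has a further weakly convergent subsequence in $D([0,T])$, and the finite-dimensional convergence forces every subsequential limit to coincide in law with $Y$ on a convergence-determining set; since the finite-dimensional distributions determine the law on $D([0,T])$, the whole sequence converges weakly to $(Y_t)_{t\in[0,T]}$. I expect the principal obstacle to be the tightness of the preceding paragraph, and in particular the simultaneous treatment of the countably infinite state space (compact containment) and the uniform separation of switching times: because the pre-limit jumps are pinned to the grid $\{kT/N\}$, one must rule out switches on neighbouring cells with a bound that does not degrade as $N\to\infty$.
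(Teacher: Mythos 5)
Your scheme---finite-dimensional convergence from Theorem \ref{approx_y_thm2} plus full tightness (compact containment and modulus control) and a subsequence argument---is the general-purpose route, and it is genuinely different from the paper's. The paper never verifies tightness at all: the result it invokes from \cite{gikhman-skorokhod-stoch-proc} (Theorem 4, Section VI.5) is a criterion \emph{specific to Markov processes}, under which finite-dimensional convergence needs only to be supplemented by the uniform displacement estimate (\ref{skorokhod_cond}),
$$ \lim_{h\to 0}\limsup_{N\to\infty}\ \sup_{x\in\N,\ 0\le s-t\le h}\prb^{(N)}\left(\left|Y^{(N)}_s-Y^{(N)}_t\right|>\delta\ \middle|\ Y^{(N)}_t=x\right)=0, $$
and that estimate is a three-line computation: on $\N$ a displacement exceeding $\delta$ forces $Y^{(N)}_s\neq Y^{(N)}_t$; homogeneity reduces everything to the probability of leaving $x$ within time $h$; and the diagonal entries of (\ref{yn_trans_probability}) give the bound $1-\exp\left(-\lambda_x \tfrac{T}{N}\left\lfloor \tfrac{hN}{T}\right\rfloor\right)\le 1-e^{-\lambda^* h}$, uniform in $x$ by (\ref{lambda_start_def}). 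So the Markov structure does all the work that you assign to compact containment and to the jump-separation/Aldous step. Incidentally, your opening sentence misdescribes the reference---its criterion is precisely \emph{not} of the fdd-plus-tightness form---and $\N$ with its discrete topology \emph{is} locally compact; the relevant obstruction is only that it is not compact.

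If one insists on the general route, your compact containment step has a genuine gap. Tightness of the number of switches (the Remark after Lemma \ref{nt_exp_moment}) does not confine the \emph{values} of the chain: a single jump from state $1$ can land arbitrarily high, and what must be controlled is the tail $\sum_{j\ge K} p^{(N)}_{ij}\big/\left(1-p^{(N)}_{ii}\right)$ of the jump law, uniformly in $N$ and over the states $i$ actually visited. Uniform ergodicity of the limit process $Y$ is a long-time property and yields nothing uniform in $N$ about which states $Y^{(N)}$ visits on $[0,T]$, and the bound $p^{(N)}_{ij}\le\lambda_i T/N$ from (\ref{pin_maj}) controls individual entries, not tails; so the reasons you cite would not close this step. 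Two ways to repair it: (a) induct over the at most $m$ jumps, using that for each fixed $i$ the jump law $\left(p^{(N)}_{ij}\big/\left(1-p^{(N)}_{ii}\right)\right)_{j\neq i}$ converges pointwise, hence by Scheff\'{e}'s theorem in total variation, to the probability law $\left(\lambda_{ij}/\lambda_i\right)_{j\neq i}$, so that finite sets carrying mass $1-\eta$ can be chosen uniformly in large $N$ for the finitely many states reachable so far; or (b) observe that in the standard (Ethier--Kurtz) formulation of the tightness criterion, compact containment is needed only at \emph{fixed} times, where it is free from the already-proved pointwise convergence of the marginal pmfs (Scheff\'{e} again). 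Your jump-separation argument, by contrast, is sound in outline---it is essentially the device the paper itself uses later, in the proof of Lemma \ref{tightness_conditions}, via Theorem \ref{approx_y_thm1} and the positivity of the limiting occupation times---but as written it, too, is a sketch rather than a proof.
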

\begin{proof} In Theorem \ref{approx_y_thm2} we already proved the convergence of finite-dimensional distributions. Therefore, by Theorem 4, section VI.5, \cite{gikhman-skorokhod-stoch-proc},   we have to verify that for all $\delta > 0$
\begin{equation}\label{skorokhod_cond}
  \lim_{h\to 0}\limsup\limits_{N\to \infty} \sup_{x\in \N, 0\le s-t\le h}\prb^{(N)}\left( \left|Y^{(N)}_s - Y^{(N)}_t \right| > \delta\ \middle|\ Y^{(N)}_t = x\right) = 0.
\end{equation}

Let us examine the probability
$$\prb^{(N)}\left( \left|Y^{(N)}_{t+h} - Y^{(N)}_t \right| > \delta\ \middle|\ Y^{(N)}_t = x\right). $$
Since the chain $Y^{(N)}$ takes values in the set $\N$, the condition $\left| Y^{(N)}_{t+h} - Y^{(N)}_t \right| > \delta$ means that $Y^{(N)}_{t+h} \neq Y^{(N)}_t$. Thus, we can write
\begin{align*}
\prb^{(N)}&\left( \left|Y^{(N)}_{t+h} - Y^{(N)}_t \right| > \delta\ \middle|\ Y^{(N)}_t = x\right) = \prb^{(N)}\left( Y^{(N)}_{t+h} \neq x |\ Y^{(N)}_t = x\right)  \\
 &= \prb^{(N)}\left( Y^{(N)}_{h} \neq x |\ Y^{(N)}_0 = x\right).
\end{align*}
where the last equality follows from homogeneity. 
For a fixed $h$ and large enough N we have
\begin{equation*}
    \prb^{(N)}\left( Y^{(N)}_{h} \neq x |\ Y^{(N)}_0 = x\right) = 
    1-p^{(N)}_{xx}\left(\left\lfloor {hN\over T}\right\rfloor \right) \le 1 - \exp\left(-\lambda_x {T\over N}\left\lfloor {hN\over T}\right\rfloor \right),
\end{equation*}
where we used the inequality
$$ \prb^{(N)}\left(Y^{(N)}_k = x\ \middle|\ Y_0 = x\right) \ge \prb^{(N)}\left(Y^{(N)}_1 = Y^{(N)}_2 = \ldots = Y^{(N)}_k=x\ \middle|\ Y_0 = x\right). $$

Now we can rewrite the left hand side of (\ref{skorokhod_cond}) as
\begin{align*}
 \lim_{h\to 0} &\limsup\limits_{N\to \infty} \sup_{x\in \N, 0\le s-t\le h}\prb^{(N)}\left( \left|Y^{(N)}_s - Y^{(N)}_t \right| > \delta\ \middle|\ Y^{(N)}_t = x\right)\\
 &\le \lim_{h\to 0} \limsup\limits_{N\to \infty} \sup_{x\in \N, 0\le s-t\le h}  \left(1 - \exp\left(-\lambda_x {T\over N}\left\lfloor {(s-t)N\over T}\right\rfloor \right)\right) \\
 &\le 1 - \lim_{h\to 0} e^{-\lambda_* h} = 0.
\end{align*}

\end{proof}

\section{Weak convergence to a geometric Brownian motion with Markov
switching in the multiplicative scheme of series}\label{vol_conv}

Let us consider a sequence of independent random variables  $\left\{R^{(N)}_{i,j}, 0\le i \le N, j \in \N\right\}$ introduced in Section \ref{discrete-time-definitions}.
Assume that\\
(i) $R^{(N)}_{i,j} > -1$ a.s., and there exist non-negative real numbers $\{\gamma_N, N\ge 1\}$, such that $\sup_{i,j} \left|R^{(N)}_{i,j}\right| \le \gamma_N$, a.s. and $\gamma_N\to 0$,\ $N\to \infty$.\\
(ii) $\mu^{(N)}_j := \E R^{(N)}_{i,j}$ does not depend on $i$, and $\lim\limits_{N\to \infty} \left(1 + \mu^{(N)}_j\right)^N = e^{\mu_j T}.$  \\
(iii) For every fixed $t \in [0,T]$
\begin{equation}\label{unif_conv_of_variances} 
\sup_j \left|\lsum_{i=1}^{t^{(N)}} \Var R^{(N)}_{i,j} - \sigma^2_j t\right| \to 0,\ N\to \infty,
\end{equation}
where $t^{(N)} = \lfloor {Nt\over T}\rfloor.$\\
(iv) $\left\{R^{(N)}_{i,j}, 0\le i\le N, j\in \N\right\}$ and $\left\{Y^{(N)}_k, 1\le k\le N\right\}$ are independent.

Recall the process $\left(U^{(N)}_t\right)_{t\ge 0}$ defined in Section \ref{discrete-time-definitions}
\begin{equation}\label{tproc_def}
U^{(N)}_t = \lsum_{i=1}^{t^{(N)}} \log\left(1 + R^{(N)}_{i, Y^{(N)}_i}\right). 
\end{equation}
Note that the trajectories of this process are càdlàg.
The main result of the paper is stated in the following theorem.
\begin{thm}\label{main_result}
The following convergence takes place
$$ \left(U^{(N)}_t\right)_{t\in [0,T]} \to \left(\int_0^t \sigma_{Y_s} dW_s + \int_0^t \left(\mu_{Y_s} - {\sigma^2_{Y_s}\over 2}\right)ds \right)_{t\in [0,T]},$$
\end{thm}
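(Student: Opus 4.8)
The plan is to exploit the independence of the net-profit rates from the switching chain and to prove the convergence by \emph{conditioning on the Markov switching}, exactly as announced in the introduction. Fix a bounded functional $H$ on the Skorokhod space $D([0,T],\R)$ that is continuous in the Skorokhod topology. Writing $\mathcal{G}^{(N)}=\sigma\{Y^{(N)}_k,\ 1\le k\le N\}$ and using assumption (iv), the conditional law of $U^{(N)}$ given $\mathcal{G}^{(N)}$ depends only on the realized state sequence $(Y^{(N)}_1,\ldots,Y^{(N)}_N)$, since conditionally each increment $\log(1+R^{(N)}_{i,Y^{(N)}_i})$ is an independent variable whose distribution is that of $\log(1+R^{(N)}_{i,j})$ with $j=Y^{(N)}_i$. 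I would therefore introduce $\Phi_N(y)=\E^{(N)}[H(U^{(N)})\mid Y^{(N)}=y]$ and the limit functional $\Phi(y)=\E[H(U)\mid Y=y]$, so that $\E^{(N)}H(U^{(N)})=\E^{(N)}\Phi_N(Y^{(N)})$ and $\E H(U)=\E\Phi(Y)$, and reduce everything to two facts: (a) a \emph{conditional functional central limit theorem}, namely $\Phi_N(y^{(N)})\to\Phi(y)$ whenever step functions $y^{(N)}\to y$ in the Skorokhod topology; and (b) the already-established weak convergence $Y^{(N)}\Rightarrow Y$ of Theorem \ref{YN_conv}.

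For step (a) I would first Taylor-expand each increment. Assumption (i) gives $|R^{(N)}_{i,j}|\le\gamma_N\to0$, hence $|\log(1+R^{(N)}_{i,j})|\le c_N\to0$ uniformly, and $\log(1+R^{(N)}_{i,j})=R^{(N)}_{i,j}-\tfrac12(R^{(N)}_{i,j})^2+\rho^{(N)}_{i,j}$ with a remainder obeying $|\rho^{(N)}_{i,j}|\le C\gamma_N(R^{(N)}_{i,j})^2$. Taking conditional expectations and using $\E[(R^{(N)}_{i,j})^2]=\Var R^{(N)}_{i,j}+(\mu^{(N)}_j)^2$, the cumulative conditional mean $M^{(N)}(t)=\sum_{i=1}^{t^{(N)}}\E\log(1+R^{(N)}_{i,y^{(N)}_i})$ splits into a first-order part $\sum_{i=1}^{t^{(N)}}\mu^{(N)}_{y^{(N)}_i}\to\int_0^t\mu_{y_s}\,ds$ governed by assumption (ii) (which forces $\mu^{(N)}_j=\mu_jT/N+o(1/N)$), a second-order part $-\tfrac12\sum\Var R^{(N)}_{i,y^{(N)}_i}\to-\tfrac12\int_0^t\sigma^2_{y_s}\,ds$ by assumption (iii), and negligible pieces $-\tfrac12\sum(\mu^{(N)}_j)^2=O(1/N)$ and $\sum\rho^{(N)}_{i,j}=O(\gamma_N)$; altogether $M^{(N)}(t)\to\int_0^t(\mu_{y_s}-\sigma^2_{y_s}/2)\,ds$. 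Similarly the cumulative conditional variance $V^{(N)}(t)=\sum_{i=1}^{t^{(N)}}\Var\log(1+R^{(N)}_{i,y^{(N)}_i})\to\int_0^t\sigma^2_{y_s}\,ds$. Since $c_N\to0$ the Lindeberg condition holds automatically, so the Lindeberg--Feller functional central limit theorem for triangular arrays of independent summands yields convergence of the conditional law of $U^{(N)}$ to the Gaussian process with independent increments, mean $\int_0^t(\mu_{y_s}-\sigma^2_{y_s}/2)\,ds$ and variance $\int_0^t\sigma^2_{y_s}\,ds$ — precisely the conditional law of $\int_0^t\sigma_{Y_s}\,dW_s+\int_0^t(\mu_{Y_s}-\sigma^2_{Y_s}/2)\,ds$ given $Y=y$. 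Finiteness of $N_T$ (Lemma \ref{nt_exp_moment}) and convergence of the discretized jump times (Theorem \ref{approx_y_thm1}) ensure that along $y^{(N)}\to y$ the state sequence eventually reproduces the finite regime pattern of $y$, so that the interval-by-interval invariance principle concatenates correctly.

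For step (b) I would combine (a) with $Y^{(N)}\Rightarrow Y$ through the extended continuous-mapping theorem: by the Skorokhod representation theorem realize the chains on a common probability space with $Y^{(N)}\to Y$ almost surely, so that by (a) $\Phi_N(Y^{(N)})\to\Phi(Y)$ almost surely on the set of limit paths with distinct jump times (of full measure under the law of $Y$); since $|\Phi_N|\le\|H\|_\infty$, dominated convergence then gives $\E^{(N)}\Phi_N(Y^{(N)})\to\E\Phi(Y)$, i.e. $\E^{(N)}H(U^{(N)})\to\E H(U)$, which is the asserted weak convergence in the Skorokhod topology.

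The hardest step is (a): making the conditional invariance principle hold \emph{uniformly} along sequences $y^{(N)}\to y$ rather than for a single fixed path. The delicate points are that the regime-switch locations of $Y^{(N)}$ live on the grid $\{kT/N\}$ and only converge to the continuous-time jump times $\tau_k$, that concatenation of the independent Gaussian blocks is continuous in the Skorokhod topology only away from configurations with coinciding jump times (a null set for the law of $Y$), and that the convergence of $V^{(N)}$ and $M^{(N)}$ must be controlled near the random switch points. Lemma \ref{nt_exp_moment} (with its accompanying remark) supplies the uniform exponential moment of $N_T$, which lets us restrict to the event $\{N_T\le m\}$ up to arbitrarily small probability and thereby reduce to finitely many regime blocks, while Theorems \ref{approx_y_thm1}--\ref{approx_y_thm2} supply the requisite convergence of the jump times; assembling these into the uniform statement (a) is where the real work lies.
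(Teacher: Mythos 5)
Your plan is sound and shares the paper's central idea --- condition on the whole switching trajectory, apply a block-wise invariance principle to the independent net-profit sums over each occupation interval, and then feed the weak convergence $Y^{(N)}\Rightarrow Y$ (Theorem \ref{YN_conv}) back in --- but the architecture is genuinely different from the paper's. The paper never proves a conditional \emph{functional} CLT: it establishes convergence of finite-dimensional distributions by computing characteristic functions, replacing each conditional factor $\E^{(N)}\left[\zeta^{(N)}_{k,j}\mid \EuScript{Y}^{(N)}\right]$ by the Gaussian characteristic function $\varphi_{k,j}$ with an error made uniform over blocks and states (Lemma \ref{uniform_convergence_of_char_functions}, a Berry--Esseen-type estimate), truncating on $\{N_T\le m\}$, and then recognizing the product of limit factors as an explicit bounded Skorokhod-continuous functional $h$ of the path of $Y^{(N)}$ (Lemma \ref{phi_conv}); tightness of $U^{(N)}$ is then verified separately (Lemma \ref{tightness_conditions}, via Theorem 13.2 of Billingsley). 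Your route instead tests directly against bounded continuous functionals $H$ and packages both finite-dimensional convergence and tightness into the conditional FCLT of your step (a), which you then combine with the Skorokhod-representation/extended continuous-mapping argument of step (b); your step (b) plays exactly the role of the paper's Lemma \ref{phi_conv}, with the explicit functional $h$ replaced by the abstract $\Phi(y)=\E\left[H(U)\mid Y=y\right]$. What your approach buys is conceptual economy: no characteristic functions and no separate tightness lemma. What it costs is that all the analytic work is concentrated in step (a), which you leave as a sketch: there you must (1) re-derive what the paper simply cites as Theorem 5.53 of F\"ollmer--Schied via your Taylor expansion, (2) upgrade the variance convergence of assumption (iii) from fixed $t$ to moving block endpoints $s^{(N)}_k\to s_k$, which follows from monotonicity of $t\mapsto\sum_{i\le t^{(N)}}\Var R^{(N)}_{i,j}$ and continuity of the limit (a Dini-type argument worth stating explicitly), and (3) prove joint convergence of the finitely many independent block processes together with continuity of the concatenation map at limit configurations with distinct jump times, so that $\Phi_N(y^{(N)})\to\Phi(y)$ holds along converging regime paths rather than for a single fixed path. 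None of these is a flaw --- each is standard, and your sketch names the right ingredients --- so the proposal is a viable and arguably cleaner alternative, with the hard work relocated rather than removed.
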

in Skorokhod topology, when $ N\to \infty$.
\begin{proof}


We start our proof by establishing the convergence of finite-dimensional distributions.
To this end, we consider $\{\alpha_1, \ldots, \alpha_n\} \subset \R$, 
$ 0 = t_0 \le t_1 \le \cdots \le t_n \le T$, and show the convergence of the characteristic functions
\begin{align}\label{char_func_conv}
\E^{(N)} &\exp\left(i\lsum_{k=0}^{n-1} \alpha_{k+1} \left(U^{(N)}_{t_{k+1}} - U^{(N)}_{t_k}\right)\right)\\  \nonumber
&\to \E \exp\left(i\lsum_{k=0}^{n-1} \alpha_{k+1} \left\{\int_{t_k}^{t_{k+1}}\sigma_{Y_s} dW_s + \int_{t_k}^{t_{k+1}}\left(\mu_{Y_s}- { \sigma^2_{Y_s}\over 2}\right)ds\right\}\right),\ N\to \infty.
\end{align}
Let us introduce some notation:
\begin{gather*}
\xi^{(N)}_k = \exp\left(i\alpha_{k+1} \left(U^{(N)}_{t_{k+1}} - U^{(N)}_{t_k}\right)\right), 0\le k \le n-1,\\  
\xi_k = \exp\left(i\alpha_{k+1} \left\{\int_{t_k}^{t_{k+1}}\sigma_{Y_s}dW_s + \int_{t_k}^{t_{k+1}}\left(\mu_{Y_s}- { \sigma^2_{Y_s}\over 2}\right)ds\right\}\right), 0\le k \le n-1,  \\
\Xi^{(N)} = \prod\limits_{j=1}^n \xi^{(N)}_j, \Xi = \prod\limits_{j=1}^n \xi_j.
\end{gather*}
Thus, the left hand side of (\ref{char_func_conv}) equals $\mathbb{E}^{(N)}\left[\Xi^{(N)}\right]$ and the right hand side equals $\mathbb{E}\left[\Xi\right]$.

Denote by $r_{k} = N_{t_k}$ a number of jumps of the process $\left(Y^{(N)}_t\right)_{t\ge 0}$ before the time $t_k$, so that
$$ \tau_{r_k} \le t_k < \tau_{r_k + 1}.$$

This allows us to represent the interval $[t_{k}, t_{k+1}]$, $0\le k\le n-1$ as a union 
of renewal intervals in the following way
$$ [t_k, t_{k+1}] = \left[t_k, \tau^{(N)}_{r_k + 1}\right) \bigcup \left[\tau^{(N)}_{r_k+1}, \tau^{(N)}_{r_k + 2}\right)\bigcup \cdots \bigcup \left[\tau^{(N)}_{r_{k+1}}, t_{k+1}\right]. $$
For ease of notation we can renumerate these intervals as
$$ [t_k, t_{k+1}] = \left[s_{k,0}, s_{k,1}\right) \bigcup \left[s_{k,1}, s_{k,2}\right)\bigcup \cdots \bigcup \left[s_{k, m_k-1}, s_{k,m_k}\right], $$
where $s_{k,0} = t_k$, $m_k = r_{k+1} - r_k + 1$, $s_{k,m_k} = t_{k+1}$. In case $m_k > 1$ we have $s_{k,j} = \tau^{(N)}_{r_k + j -1}$, $1\le j \le m_k-1$. \\
Note that
\begin{equation}
\lsum_{k} m_k \le N_T,
\end{equation}
where $N_T$ is a number of jumps before $T$ defined in (\ref{Nt_def}), and the process $\left(Y^{(N)}_t\right)_{t\ge 0}$ is taking a single value on each of the intervals $[s_{k,j}, s_{k,j+1})$. Denote this value by $y_{k,j}$.
Now, we can write the following representation
\begin{align*}
\xi^{(N)}_k &= \exp\left(i\alpha_{k+1} \left(U^{(N)}_{t_{k+1}} - U^{(N)}_{t_k}\right)\right) = \exp\left(i\alpha_{k+1} \left(U^{(N)}_{s_{k,m_k}} - U^{(N)}_{s_{k,0}}\right)\right)\\
&= \prod\limits_{j=1}^{m_k}\exp\left(i\alpha_{k+1} \left(U^{(N)}_{s_j} - U^{(N)}_{s_{j-1}}\right)\right) = \prod\limits_{j=1}^{m_k} \zeta^{(N)}_{k,j},
\end{align*}
where
\begin{align*}
 \zeta^{(N)}_{k,j} = \exp\left(i\alpha_{k+1} \left(U^{(N)}_{s_j} - U^{(N)}_{s_{j-1}}\right)\right),\ 1 \le j \le m_k.
\end{align*}

This construction is illustrated below (we omitted upper indices $N$ for brevity).\\
\begin{picture}(340,120)
\put(180,105){$\xi_k$}
\put(84,100){\line(1,0){200}}

\put(86,75){$\zeta_{k,1}$}
\put(84,70){\line(1,0){20}}

\put(112,75){$\zeta_{k,2}$}
\put(104,70){\line(1,0){30}}

\put(142,75){$\zeta_{k,3}$}
\put(134,70){\line(1,0){30}}

\put(250,75){$\zeta_{k,m_k}$}
\put(242,70){\line(1,0){40}}

\multiput(84,48)(0,5){11}{\line(0,1){3}} 
\multiput(104,48)(0,5){11}{\line(0,1){3}} 
\multiput(134,48)(0,5){11}{\line(0,1){3}} 
\multiput(164,48)(0,5){11}{\line(0,1){3}} 
\multiput(242,48)(0,5){11}{\line(0,1){3}} 
\multiput(284,48)(0,5){11}{\line(0,1){3}}

\put(10,50){\line(1,0){280}}
\put(40,48){\line(0,1){5}}
\put(30,40){$\tau_{r_k}$}
\put(84,48){\line(0,1){5}}
\put(80,40){$t_k$}
\put(104,48){\line(0,1){5}}
\put(100,40){$\tau_{r_k + 1}$}
\put(134,48){\line(0,1){5}}
\put(130,40){$\tau_{r_k + 2}$}
\put(164,48){\line(0,1){5}}
\put(160,40){$\tau_{r_k + 3}$}
\put(200,40){$\ldots$}

\put(242,48){\line(0,1){5}}
\put(240,40){$\tau_{r_{k+1}}$}
\put(284,48){\line(0,1){5}}
\put(280,40){$t_{k+1}$}

\put(10,20){\line(1,0){280}}
\put(84,18){\line(0,1){5}}
\put(80,10){$s_{k,0}$}
\put(104,18){\line(0,1){5}}
\put(100,10){$s_{k,1}$}
\put(134,18){\line(0,1){5}}
\put(130,10){$s_{k,2}$}
\put(164,18){\line(0,1){5}}
\put(160,10){$s_{k,3}$}
\put(200,10){$\ldots$}

\put(242,18){\line(0,1){5}}
\put(240,10){$s_{k,m_{k}-1}$}
\put(284,18){\line(0,1){5}}
\put(280,10){$s_{k,m_{k}}$}
\end{picture}

Note that the random variables $\zeta^{(N)}_{k,j}$ are conditionally independent given $\left(Y^{(N)}_t\right)_{t\in [0,T]}$.
More formally, let us denote $\sigma$-field generated by $\left(Y^{(N)}_t\right)_{t\in [0,T]}$ by 
\begin{equation}\label{Ysigma_field_def}
\EuScript{Y}^{(N)} = \sigma\left[Y^{(N)}_s,\ 0\le s\le T\right].
\end{equation}
Conditional independence then implies
\begin{align}\label{zeta_cond_ind}
\E^{(N)}&\left[\E^{(N)}\left[\prod\limits_{k,j}\zeta^{(N)}_{k,j} \middle|\ \EuScript{Y}^{(N)}\right]\mathbbm{1}_{N_T = m}\right] \\
    &= \E^{(N)}\left[\prod\limits_{k=0}^{n-1} \prod\limits_{j=1}^{m_k}\E^{(N)}\left[\zeta^{(N)}_{k,j} \middle|\ \EuScript{Y}^{(N)}\right]\mathbbm{1}_{N_T = m}\right]. \nonumber
\end{align}

We know from  Theorem 5.53,  \cite{felmer}  that for a fixed $j$ and any $0 \le s < t \le T$   the following 
weak convergence holds:
$$ \lsum_{k = s^{(N)}}^{t^{(N)}} \log\left(1 + R^{(N)}_{k,j}\right) \to \sigma_j (W_t - W_s) + \left(\mu_j - {\sigma^2_j \over 2}\right)(t-s), $$
so that for any $\alpha \in \R$
$$ \E^{(N)} \exp\left(i \alpha \lsum_{k = s^{(N)}}^{t^{(N)}} \log\left(1 + R^{(N)}_{k,j}\right) \right) \to \exp\left(-{\alpha^2\sigma^2_j(t-s)\over 2} + i\alpha \left(\mu_j - {\sigma^2_j\over 2}\right)(t-s)\right), $$
as $N\to \infty$, where $i^2=-1$. Denote the limit in the previous formula by
\begin{equation}\label{phi_jst_def}
 \varphi_{j,s,t}(\alpha) :=\exp\left(-{\alpha^2\sigma^2_j(t-s)\over 2} + i\alpha \left(\mu_j - {\sigma^2_j\over 2}\right)(t-s)\right).
\end{equation}

Recall that $Y^{(N)}_t\big|_{[s_{k,j}, s_{k,j+1})} = y_{k,j}$, whence 
$$ \zeta^{(N)}_{k,j} = \exp\left(i\alpha_{k+1} \left(U^{(N)}_{s_j} - U^{(N)}_{s_{j-1}}\right)\right)= \exp\left(i\alpha_{k+1} \lsum_{l=s^{(N)}_{j-1}}^{s^{(N)}_j}\log\left( 1 + R^{(N)}_{l,v }\right)\right),$$
where $v= y_{k,j-1}$.
Therefore
\begin{align*}
 \E^{(N)}&\left[\Xi^{(N)} \mathbbm{1}_{N_T = m}\right] =  \E^{(N)}\left[\E^{(N)}\left[\Xi^{(N)} \middle|\ \EuScript{Y}^{(N)}\right]\mathbbm{1}_{N_T = m}\right] \\
&=\E^{(N)}\left[\E^{(N)}\left[\prod\limits_{k,j}\zeta^{(N)}_{k,j} \middle|\ \EuScript{Y}^{(N)}\right]\mathbbm{1}_{N_T = m}\right]\\
    &= \E^{(N)}\left[\prod\limits_{k=0}^{n-1} \prod\limits_{j=1}^{m_k}\E^{(N)}\left[\zeta^{(N)}_{k,j} \middle|\ \EuScript{Y}^{(N)}\right]\mathbbm{1}_{N_T = m}\right],
\end{align*}    
where $\EuScript{Y}^{(N)}$ is defined by (\ref{Ysigma_field_def}) and we used conditional independence  (\ref{zeta_cond_ind}) in the last equality.   

Denote by
$$ \varphi_{k,j}= \varphi_{y_{k,j-1}, s_{k,j-1}, s_{k,j}}(\alpha_{k+1}),\ 1\le j\le m_k,$$
and note that for any $\varepsilon > 0$  by Lemma \ref{uniform_convergence_of_char_functions} we can choose $N$ so big that
\begin{equation}\label{ank_ineq} 
\left|\E^{(N)}\left[\zeta^{(N)}_{k,j} - \varphi_{k,j} \middle|\ \EuScript{Y}^{(N)}\right]\right| \le  {\varepsilon\over nm},
\end{equation}
a.s. for all $0\le k \le n-1$ and all $1\le j\le m_k$.

We will consequently add and subtract  $\varphi_{k,j}$ from each term in the product 
$$\prod\limits_{k,j}\E^{(N)}\left[\zeta^{(N)}_{k,j} \middle|\ \EuScript{Y}^{(N)}\right]$$ 
and take into account that
$$ \left| \E^{(N)}\left[\zeta^{(N)}_{k,j}\middle|\ \EuScript{Y}^{(N)} \right]\right| \le \E^{(N)}\left[\left|\exp\left(i\alpha_{k+1} (U^{(N)}_{s_{k,j}} - U^{(N)}_{s_{k, j-1}})\right)\right|\ \middle|\ \EuScript{Y}^{(N)}\right] = 1.$$
Thus, we have
\begin{align*}
 \prod\limits_{k=0}^{n-1}&\prod\limits_{j=1}^{m_k}\E^{(N)}\left[\zeta^{(N)}_{k,j} \middle|\ \EuScript{Y}^{(N)}\right] \\
 &= \left(\prod\limits_{k=0}^{n-1}\prod\limits_{j=1}^{m_k-1} \E^{(N)}\left[\zeta^{(N)}_{k,j} \middle|\ \EuScript{Y}^{(N)}\right]\right) \E^{(N)}\left[\varphi_{n-1,m_k}\middle|\ \EuScript{Y}^{(N)}\right] \\
 &+\left(\prod\limits_{k=0}^{n-1}\prod\limits_{j=1}^{m_k-1} \E^{(N)}\left[\zeta^{(N)}_{k,j} \middle|\ \EuScript{Y}^{(N)}\right]\right) \E^{(N)}\left[\zeta^{(N)}_{n-1,m_k} - \varphi_{n-1,m_k}\middle|\ \EuScript{Y}^{(N)}\right]\\
 &= \left(\prod\limits_{k=0}^{n-1}\prod\limits_{j=1}^{m_k-1} \E^{(N)}\left[\zeta^{(N)}_{k,j} \middle|\ \EuScript{Y}^{(N)}\right]\right) \E^{(N)}\left[\varphi_{n-1,m_k}\middle|\ \EuScript{Y}^{(N)}\right] + a_{n-1,m_k} \\
 &= \left(\prod\limits_{k=0}^{n-1}\prod\limits_{j=1}^{m_k-2} \E^{(N)}\left[\zeta^{(N)}_{k,j} \middle|\ \EuScript{Y}^{(N)}\right]\right) \prod\limits_{j=m_k-1}^{m_k}\E^{(N)}\left[\varphi_{n-1,j}\middle|\ \EuScript{Y}^{(N)}\right] \\
 &+ a_{n-1,m_k} + a_{n-1, m_k-1} = \prod\limits_{k,j}\E^{(N)}\left[\varphi_{k,j}\middle|\ \EuScript{Y}^{(N)}\right] + \lsum_{k,j} a_{k,j},
\end{align*}
where
\begin{align*}
a_{k,j} = \left(\prod\limits_{l=0}^k \prod_{p=1}^{j-1}\E^{(N)}\left[\zeta^{(N)}_{l,p} \middle|\ \EuScript{Y}^{(N)}\right]\right)\E^{(N)}\left[\zeta^{(N)}_{k,j} - \varphi_{k,j} \middle|\ \EuScript{Y}^{(N)}\right]\left(\prod\limits_{l\ge k}\prod\limits_{p > j}\E^{(N)}\left[\varphi_{l,p} \middle|\ \EuScript{Y}^{(N)}\right]\right).
\end{align*}
Using (\ref{ank_ineq}) we obtain
\begin{equation}\label{ank_ineq_2}
|a_{k,j}| \le {\varepsilon\over mn},\ \mbox{and}\ a := \lsum_{k,j} a_{k,j} \in (-\varepsilon, \varepsilon)
\end{equation}
a.s. on $\{N_T = m\}$.
From (\ref{ank_ineq_2}) we get
\begin{align*}
\E^{(N)}&\left[\Xi^{(N)} \mathbbm{1}_{N_T=m}\right] -a = \E^{(N)}\left[\prod\limits_{k,j}\E^{(N)}\left[\varphi_{k,j}\ \middle|\ \EuScript{Y}^{(N)} \right]\mathbbm{1}_{N_T = m}\right] \\
&=\E^{(N)}\left[\prod\limits_{k=0}^{n-1}\E^{(N)}\left[\prod\limits_{j=1}^{m_k}\varphi_{k,j}\ \middle|\ \EuScript{Y}^{(N)} \right]\mathbbm{1}_{N_T = m}\right].
\end{align*}
Using Lemma \ref{phi_conv} we find  $m$ and $N_0$ such that for all $N\ge N_0$
\begin{align*}
\left| \E^{(N)}\left[\Xi^{(N)} \right] - \E\left[\Xi\right] \right| &\le \left| \E^{(N)}\left[\Xi^{(N)}\mathbbm{1}_{N_T\le m} \right] - \E\left[\Xi \mathbbm{1}_{N_T \le m}\right] \right| \\
&+ \mathbb{P}^{(N)}(N_T > m) + \mathbb{P}(N_T > m) < \varepsilon,
\end{align*}
and thus convergence (\ref{char_func_conv}) follows.

To conclude the proof, we have to show that the family of probability distributions $\mathbb{P}^{(N)}$ is tight. This is an  immediate consequence of  Theorem 13.2 from \cite{billingsley}. Note  that conditions of this theorem are verified in Lemma \ref{tightness_conditions}.
\end{proof}

\section{Auxiliary lemmas}
\begin{lem}\label{unif_conv_mark_chain}
Let $\{p_{ij}(t), i,j\in \N, t\ge 0\}$ be transition probabilities for the Markov process $\left(Y_t\right)_{t\ge 0}$ with the infinitesimal matrix $\mathbb{A}$ given in (\ref{intensity_matrix}), for which
$$\lambda_* = \inf_i \lambda_i > 0,\ \mbox{and}\ \lambda^* = \sup_i \lambda_i < \infty.$$
Then the following series converge uniformly in $h$ for every fixed $i\in \N$ and $m>0$:
$$ S_{1,i,m}(h) = \lsum_{j_1\neq i} \lsum_{j_2\neq j_1} \ldots \lsum_{j_m \neq j_{m-1}} \prod\limits_{k=1}^m {p_{j_{k-1} j_k}(h)\over h}, $$
$$ S_{2,i,m}(h) = \lsum_{j_1\neq i} \lsum_{j_2\neq j_1} \ldots \lsum_{j_m \neq j_{m-1}} \prod\limits_{k=1}^m \left({p_{j_{k-1} j_k}(h)\over h}e^{-\lambda_{j_k} t_i}\right), $$
where $\{t_1,\ldots, t_m\} \subset [0,T]$. 
In particular,
\begin{align}\label{pij_lim_exchange}
\lim_{h\to 0} \lsum_{j_1\neq i} &\lsum_{j_2\neq j_1} \ldots \lsum_{j_m \neq j_{m-1}} \prod\limits_{k=1}^m \left({p_{j_{k-1} j_k}(h)\over h}e^{-\lambda_{j_k} t_i}\right)\\
&=  \lsum_{j_1\neq i} \lsum_{j_2\neq j_1} \ldots \lsum_{j_m \neq j_{m-1}} \prod\limits_{k=1}^m \left(\lambda_{j_{k-1}j_k} e^{-\lambda_{j_k} t_i}\right). \nonumber
\end{align}
\end{lem}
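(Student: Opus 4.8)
The plan is to reduce both series to a single mechanism and then push $h\to 0$ through the nested sums using a total-mass (Scheffé) argument rather than a naive majorant. First I would record the one uniform bound that drives everything: since $\sum_{j\neq k} p_{kj}(h) = 1 - p_{kk}(h)$ and $p_{kk}(h)\ge e^{-\lambda_k h}$, each inner row sum satisfies $\sum_{j\neq k}\frac{p_{kj}(h)}{h} = \frac{1-p_{kk}(h)}{h}\le \lambda_k \le \lambda^*$ uniformly in $h$. Iterating this bound over the $m$ nested sums gives $S_{1,i,m}(h)\le(\lambda^*)^m$ and $S_{2,i,m}(h)\le(\lambda^*)^m$ for all $h$, so both series converge and are uniformly bounded. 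Because the factors $e^{-\lambda_{j_k}t_i}\in(0,1]$ do not depend on $h$, it suffices to handle $S_2$; $S_1$ is the special case where every exponential factor is replaced by $1$. I would carry each level's factor inside the summand at that level, so that nothing changes except that the mass at each level is damped by a bounded factor.

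The key point is that a termwise Weierstrass M-test is unavailable: the natural per-term bound $\frac{p_{kj}(h)}{h}\le\lambda_k$ from (\ref{pin_maj}) is \emph{not} summable over $j$ on the countable state space. Instead I would exploit convergence of the total mass of each row. By (\ref{pin_maj}) we have $\frac{p_{kj}(h)}{h}\to\lambda_{kj}$ for each fixed pair, while $\frac{1-p_{kk}(h)}{h}=\sum_{j\neq k}\frac{p_{kj}(h)}{h}$ is squeezed between $\lambda_k$ (from $1-p_{kk}(h)\le 1-e^{-\lambda_k h}\le\lambda_k h$) and, via Fatou's lemma, a liminf that is at least $\sum_{j\neq k}\lambda_{kj}=\lambda_k$; hence it converges to $\lambda_k$. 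Since the summands are nonnegative, pointwise convergence together with convergence of the total is exactly the hypothesis of Scheffé's lemma, giving $\ell^1$ convergence of each row, $\sum_{j\neq k}\bigl|\frac{p_{kj}(h)}{h}-\lambda_{kj}\bigr|\to 0$. This total-variation control, uniformly bounded by $2\lambda^*$, is the substitute for a dominating series.

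With this in hand I would prove (\ref{pij_lim_exchange}) by induction on $m$. The right inductive statement carries an auxiliary, possibly $h$-dependent family $\phi_h\colon\N\to\R$ with $\sup_{h,j}|\phi_h(j)|\le B$ and $\phi_h(j)\to\phi_0(j)$ for each $j$, asserting that $\lim_{h\to0}\sum_{j_1\neq i}\cdots\sum_{j_m\neq j_{m-1}}\prod_{k=1}^m\frac{p_{j_{k-1}j_k}(h)}{h}\,\phi_h(j_m)=\sum_{j_1\neq i}\cdots\prod_{k=1}^m\lambda_{j_{k-1}j_k}\,\phi_0(j_m)$. The base case $m=1$ splits $\frac{p_{ij_1}(h)}{h}\phi_h(j_1)-\lambda_{ij_1}\phi_0(j_1)$ into a part handled by the $\ell^1$ row convergence and a part handled by dominated convergence against the summable weights $\lambda_{ij_1}$. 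For the step I would sum over $j_m$ first: the inner sum defines a new bounded family $\psi_h(j_{m-1})=\sum_{j_m\neq j_{m-1}}\frac{p_{j_{m-1}j_m}(h)}{h}\phi_h(j_m)$ with $\psi_h\to\psi_0$ pointwise and $|\psi_h|\le\lambda^* B$, by the base-case argument applied at row $j_{m-1}$; the $(m-1)$-level statement applied to $\psi_h$ then closes the induction. The per-level exponential factors $e^{-\lambda_{j_k}t_i}$ are simply folded into $\phi_h$ and $\psi_h$, since each lies in $(0,1]$, and taking $\phi_h\equiv1$ (resp.\ the product of these factors) yields the $S_1$ (resp.\ $S_2$) version of (\ref{pij_lim_exchange}).

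Finally, for the uniform convergence of the series in $h$, I would observe that each summand extends to a continuous function on a compact interval $[0,h_0]$, with value $\prod_k\lambda_{j_{k-1}j_k}$ at $h=0$ by (\ref{pin_maj}); the partial sums over finite index blocks are then continuous and increase monotonically to $S_{1,i,m}$ (resp.\ $S_{2,i,m}$). Since the preceding step shows the limiting sum is continuous at $h=0$ (and the same Scheffé argument gives continuity at interior points), Dini's theorem yields uniform convergence on $[0,h_0]$. The main obstacle throughout is precisely the countable state space: there is no $h$-independent summable majorant, so the argument must replace the M-test by the total-mass/Scheffé mechanism and then propagate the resulting $\ell^1$ control through all $m$ nested sums via the induction, which is the step most prone to technical slips.
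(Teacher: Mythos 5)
Your proof is correct, and it takes a genuinely different route from the paper's. The paper proves uniform-in-$h$ convergence \emph{first} and reads off the interchange (\ref{pij_lim_exchange}) as a corollary: its base case $m=1$ applies Dini's theorem to a single row, using that $\sum_{j\neq i}p_{ij}(h)/h=(1-p_{ii}(h))/h$ extends continuously to $[0,1]$ with value $\lambda_i$ at $h=0$ and the terms are nonnegative; the inductive step then bounds the innermost row sum by $2\lambda^*$, so tails of $S_{1,i,m}$ are dominated by $2\lambda^*$ times tails of $S_{1,i,m-1}$. You invert this order: you prove the interchange directly via Scheff\'e's lemma (pointwise convergence $p_{kj}(h)/h\to\lambda_{kj}$ plus convergence of the row total, squeezed between the upper bound $\lambda_k$ coming from $p_{kk}(h)\ge e^{-\lambda_k h}$ and the Fatou lower bound), propagate the resulting $\ell^1$ row control through the nested sums by an induction carrying a bounded, $h$-dependent test family, and only afterwards obtain uniform convergence by applying Dini to the full multi-index series, whose limit you have just shown to be continuous on the compact interval. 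Both arguments rest on the same two facts from Chung and both invoke Dini, just at different stages. Your route has one concrete advantage: the paper's tail estimate, as written, handles only tails in which one of the outer indices $j_1,\dots,j_{m-1}$ is large (it sums out $j_m$ completely), and the slab where only the innermost index $j_m$ is large requires an additional splitting of the outer sums that the paper glosses over; your test-function induction together with the final Dini application on the whole series sidesteps this issue entirely. What the paper's order buys is brevity: once uniform convergence is known, the term-by-term limit is immediate, whereas you need the extra Scheff\'e/dominated-convergence bookkeeping in the base case and the folding of the factors $e^{-\lambda_{j_k}t_i}$ into the test families --- a step you correctly flag as delicate, and which works precisely because those factors are $h$-independent and lie in $(0,1]$.
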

\begin{proof}
It is a well-known fact that for a Markov process generated by $\mathbb{A}$ we have
\begin{equation}\label{unif_conv_mark_chain_1}
\lim_{h\to 0} p_{ij}(h)/h = \lambda_{ij},
\end{equation}
see \cite{chung}, Sections II.2 and II.3, Theorem 1 from Section II.3 (p. 130) in particular. 
Let us consider:
\begin{align*}
\sup_i {1-p_{ii}(h)\over h} &\le \sup_i {p_{ii}(h) - e^{-\lambda_i h}\over h} + \sup_i {1-e^{-\lambda_i h}\over h} \\
&\le \sup_i {\mathbb{P}(\tau_1 < h | Y_0 = i)\over h} + \sup_i \lambda_i  = \sup_i {1-e^{-\lambda_i h}\over h} + \lambda^* \\
&\le {1-e^{-\lambda^* h}\over h } + \lambda^*  \le 2\lambda^*,
\end{align*}
where we used an elementary inequality $1 - e^{-x} \le x,\ x\ge 0$.
Let us prove that $S_{1,i,m}(h)$ is a uniformly convergent series. We establish this by induction in $m$. 
For $m=1$
\begin{equation*}
\lsum_{j\neq i} {p_{ij}(h)\over h} = \left\{
\begin{array}{l}
{1-p_{ii}(h)\over h},\ h \in (0,1],\\
\lambda_i,\ h=0,
\end{array}\right.
\end{equation*}
is a continuous function for $h\in [0,1]$. Since ${p_{ij}(h)\over h} \ge 0$, by Dini's theorem we have uniform convergence. \\
Assume now that the statement is true for $m-1$. We have then
\begin{align*}
  S_{1,i,m}(h) &= \lsum_{j_1\neq i} \lsum_{j_2\neq j_1} \ldots \lsum_{j_m \neq j_{m-1}} \prod\limits_{k=1}^m {p_{j_{k-1} j_k}(h)\over h} \\
  &= \lsum_{j_1\neq i} \lsum_{j_2\neq j_1} \ldots \lsum_{j_{m-1} \neq j_{m-2}} \left({1-p_{j_m j_m}(h)\over h}\prod\limits_{k=1}^m {p_{j_{k-1} j_k}(h)\over h}\right) \\
  \le 2\lambda^* S_{1,i,m-1}(h).
\end{align*}
Since $S_{1,i,m-1}(h)$ is uniformly convergent by assumption we derive that tails of $S_{1,i,m}$ converge to zero
\begin{align*} 
\lim_{n\to \infty} \sup_h &\lsum_{j_1>\in T^{(1)}_n} \lsum_{j_2 \in T^{(2)}_n} \ldots \lsum_{j_m \in T^{(m)}_n } \prod\limits_{k=1}^m {p_{j_{k-1} j_k}(h)\over h} \\
&\le 2\lambda^* \lim_{n\to \infty} \sup_h \lsum_{j_1 \in T^{(1)}_n} \lsum_{j_2 \in T^{(2)}_{n}} \ldots \lsum_{j_{m-1} \in T^{(m-1)}_n} \prod\limits_{k=1}^{m-1} {p_{j_{k-1} j_k}(h)\over h}  = 0,
\end{align*}
where at least one of the set $T^{(i)}_n$ consists only of elements larger or equal to $n$.
which concludes the proof of uniform convergence of $S_{1,i,m}$. 
Using the same reasoning we derive the uniform convergence of $S_{2,i,m}(h)$.
Formula (\ref{pij_lim_exchange}) follows from the uniform convergence of $S_{2,i,m}(h)$.
\end{proof}

It what follows we will use notations introduced in Section \ref{vol_conv}.
\begin{lem}\label{tightness_conditions}
Let $\left(U^{(N)}_t\right)_{t\in [0,T]}$ be a process defined at (\ref{tproc_def}). Then the following convergences hold
\begin{equation}\label{UNt_tight_cond_1}
\lim_{c\to \infty} \limsup_{n} \mathbb{P}^{(N)}\left(\sup_{t\in [0,T]} \left|U^{(N)}_t\right| \ge c\right) = 0,
\end{equation}
and for every $\varepsilon > 0$
\begin{equation}\label{UNt_tight_cond_2}
\lim_{\delta \to 0} \limsup_{n} \mathbb{P}^{(N)}\left( \sup_{\substack{t,s \in [0,T]\\ |t-s|<\delta }} \left|U^{(N)}_t - U^{(N)}_s\right| \ge \varepsilon \right) = 0.
\end{equation}
\end{lem}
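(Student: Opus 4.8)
The plan is to work conditionally on the switching trajectory, i.e.\ on the $\sigma$-field $\EuScript{Y}^{(N)}$ from \eqref{Ysigma_field_def}, exploiting that, given $\EuScript{Y}^{(N)}$, the summands $\log(1+R^{(N)}_{i,Y^{(N)}_i})$ in \eqref{tproc_def} are independent. First I would record the elementary estimates valid once $N$ is large enough that $\gamma_N\le 1/2$: from $|\log(1+x)|\le 2|x|$ and $|\log(1+x)-x|\le x^2$ for $|x|\le 1/2$ one obtains $|\log(1+R^{(N)}_{i,j})|\le 2\gamma_N$, $\big|\E\log(1+R^{(N)}_{i,j})\big|\le|\mu^{(N)}_j|+\E(R^{(N)}_{i,j})^2$ and $\Var\log(1+R^{(N)}_{i,j})\le 4\,\E(R^{(N)}_{i,j})^2$, where $\E(R^{(N)}_{i,j})^2=\Var R^{(N)}_{i,j}+(\mu^{(N)}_j)^2$. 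Writing $U^{(N)}=M^{(N)}+A^{(N)}$, with $A^{(N)}$ the conditional-mean (predictable) part and $M^{(N)}$ a conditional martingale in the discrete filtration, the conditional quadratic variation of $M^{(N)}$ over any block of indices equals $\sum_i \Var\big(\log(1+R^{(N)}_{i,Y^{(N)}_i})\mid \EuScript{Y}^{(N)}\big)$, which by the above is controlled by the variance sums in \eqref{unif_conv_of_variances}.

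The crucial structural point is that on $\{N_T\le m\}$ the trajectory $Y^{(N)}$ is constant on at most $m+1$ blocks, so every relevant variance sum decomposes into block sums with a fixed state $j$; since \eqref{unif_conv_of_variances} holds \emph{uniformly in} $j$, each such sum over a sub-interval of time-length $\ell$ is at most $\sigma^2\ell+o(1)$, uniformly. Combined with Lemma \ref{nt_exp_moment}, which lets me choose $m$ so that $\sup_N\prb^{(N)}(N_T>m)$ is arbitrarily small, and with $|\mu_j|\le\mu$, $\sigma_j\le\sigma$, this yields that $\langle M^{(N)}\rangle_T$ is bounded by a constant on $\{N_T\le m\}$ and that the increments of $A^{(N)}$ there are of order the corresponding time increment. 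For \eqref{UNt_tight_cond_1} I would then apply Doob's $L^2$ maximal inequality conditionally on $\EuScript{Y}^{(N)}$, namely $\E^{(N)}\big[\sup_t|M^{(N)}_t|^2\mid\EuScript{Y}^{(N)}\big]\le 4\langle M^{(N)}\rangle_T$, take expectations, add the uniformly bounded $\sup_t|A^{(N)}_t|$, and conclude by Chebyshev together with the tail bound for $\{N_T>m\}$.

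The main obstacle is \eqref{UNt_tight_cond_2}, the modulus-of-continuity estimate, for which a second-moment bound is insufficient: partitioning $[0,T]$ into $\sim T/\delta$ intervals, a per-block second-moment bound of order $\delta$ and a union bound over the $\sim T/\delta$ blocks produce a total of order $(T/\delta)\cdot\delta=O(1)$, in which $\delta$ cancels. The remedy is to pass to fourth moments. For a sum of conditionally independent, centered, bounded variables one has $\E^{(N)}\big[|M^{(N)}_t-M^{(N)}_s|^4\mid\EuScript{Y}^{(N)}\big]\le 3\big(\sum_i\Var(\cdot\mid\EuScript{Y}^{(N)})\big)^2+\sum_i\E^{(N)}[(\cdot)^4\mid\EuScript{Y}^{(N)}]$, where the second sum is at most $(2\gamma_N)^2\sum_i\Var(\cdot\mid\EuScript{Y}^{(N)})\to 0$ since $\gamma_N\to0$; hence, on $\{N_T\le m\}$ and for large $N$, this fourth moment is of order $\delta^2$ whenever $|t-s|<\delta$. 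Applying Doob's $L^4$ maximal inequality over each $\delta$-block, using the standard reduction bounding $\sup_{|t-s|<\delta}|U^{(N)}_t-U^{(N)}_s|$ by a constant multiple of $\max_l\sup_{u_l\le t\le u_{l+1}}|U^{(N)}_t-U^{(N)}_{u_l}|$, and then a union bound over $\sim T/\delta$ blocks with fourth-order Chebyshev, yields a bound of order $(T/\delta)\cdot\delta^2/\varepsilon^4=O(\delta)\to0$; the predictable increments over a $\delta$-block are $O(\delta)$ and hence eventually below $\varepsilon/2$, while $\{N_T>m\}$ is again absorbed through Lemma \ref{nt_exp_moment}.

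Beyond the necessity of fourth moments, the remaining care is the bookkeeping of uniformity: the variance control \eqref{unif_conv_of_variances} must be invoked \emph{uniformly in the state} and merged with the block decomposition so that all constants depend only on $m,\sigma,\mu,T$; and the predictable part requires control of $N\mu^{(N)}_j$, which on $\{N_T\le m\}$ involves only finitely many visited states, so the pointwise convergence $(1+\mu^{(N)}_j)^N\to e^{\mu_j T}$ in assumption (ii) upgrades to the uniform bound needed once one restricts to a high-probability event on which $\max_{s\le T}Y^{(N)}_s$ stays bounded.
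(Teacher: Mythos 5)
Your proof of \eqref{UNt_tight_cond_1} is essentially the paper's own argument: both of you condition on $\EuScript{Y}^{(N)}$, center the sum by the conditional means, apply a maximal inequality for conditionally independent summands (Kolmogorov's inequality in the paper, Doob's $L^2$ inequality for you -- the same tool), control the conditional variance blockwise via \eqref{unif_conv_of_variances}, and dispose of $\{N_T>m\}$ using Lemma \ref{nt_exp_moment}. For \eqref{UNt_tight_cond_2}, however, you take a genuinely different route. The paper restricts to the event $B=\{N_T\le m,\ \min_i\theta^{(N)}_i>\delta\}$, so that any window of width $\delta$ contains at most one switching time, and then bounds the oscillation on each inter-jump block by invoking the weak convergence of the block sums to $Z_t=(\mu_j-\sigma_j^2/2)t+\sigma_j W_t$ (F\"ollmer--Schied, Theorem 5.53), continuity of the modulus functional at continuous limits, and $\E\,\omega_{\delta,[0,T]}(W)\to 0$ as $\delta\to 0$. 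You instead run a Billingsley-type moment criterion: the conditional fourth-moment bound $3\bigl(\sum_i\Var(\cdot\mid\EuScript{Y}^{(N)})\bigr)^2+O(\gamma_N^2)\sum_i\Var(\cdot\mid\EuScript{Y}^{(N)})=O(\delta^2)$ per $\delta$-block, Doob's $L^4$ inequality, and a union bound over $\sim T/\delta$ blocks, giving $O(\delta/\varepsilon^4)$. The fourth-moment expansion for centered, independent, bounded summands and the $(T/\delta)\cdot\delta^2$ arithmetic are both correct, and this buys a real simplification: you never need the event $\{\min_i\theta^{(N)}_i>\delta\}$ or the one-jump-per-window case analysis, because your variance bound depends only on the total time length of a window, no matter how many switches it straddles; you also avoid the (somewhat delicate) appeal to convergence of expected moduli of continuity along weak convergence in the Skorokhod topology. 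The price is that the drift (conditional mean) part must be handled separately.

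That drift part is the one place where your argument leans on an unproven claim, which you did correctly flag: assumption (ii) is pointwise in $j$, so $\sup_j N|\mu^{(N)}_j|$ need not be finite, and $\{N_T\le m\}$ does \emph{not} confine $Y^{(N)}$ to finitely many states (with at most $m$ jumps it can still reach arbitrarily high states). Your fix -- intersecting with $\bigl\{\sup_{s\le T}Y^{(N)}_s\le K\bigr\}$ -- is sound, but you assert rather than prove that this event has uniformly (in $N$) high probability. That assertion does require justification; it follows from Theorem \ref{YN_conv} (which is proved before this lemma and independently of it, so there is no circularity), the continuity of the map $x\mapsto\sup_{s\le T}x(s)$ in the Skorokhod topology, and the a.s.\ finiteness of $\sup_{s\le T}Y_s$. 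With that sentence added, your proof closes; note that the same caveat also touches your step ``add the uniformly bounded $\sup_t|A^{(N)}_t|$'' in part \eqref{UNt_tight_cond_1}. (The paper itself is terse on the analogous point -- it asserts $\sup_{N,t}M^{(N)}_t<\infty$ -- so your explicit treatment is, if anything, the more careful one.)
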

\begin{proof}
First, let us note that $\sup_{N, t\in [0,T]}\E^{(N)} \left| U^{(N)}_t \right| < \infty $ (see \cite{felmer}, Section 5.7).
Thus, we can denote mean values by
$$ m^{(N)}_{ij} = \E^{(N)} \log\left(1 + R^{(N)}_{i,j}\right),\ \mbox{and}\ M^{(N)}_t = \E^{(N)} U^{(N)}_t, $$
and introduce a centered process
$$ \tilde U^{(N)}_t = U^{(N)}_t - M^{(N)}_t. $$
For a fixed $c>0$ we can produce the following upper bound:
\begin{align*}
 \mathbb{P}^{(N)}&\left(\sup_{t\in [0,T]} \left|\tilde U^{(N)}_t\right| \ge c\right) = \mathbb{P}^{(N)}\left(\sup_{0\le n \le N} \left|\lsum_{i=1}^n \left(\log\left(1 + R^{(N)}_{i, Y^{(N)}_i}\right) - m^{(N)}_{i, Y^{(N)}_i}\right)\right| \ge c\right)\\
 &=\E^{(N)}\left[\mathbb{P}^{(N)}\left(\sup_{0\le n \le N} \left|\lsum_{i=1}^n \left(\log\left(1 + R^{(N)}_{i, Y^{(N)}_i}\right)- m^{(N)}_{i, Y^{(N)}_i}\right)\right| \ge c\ \middle|\ \EuScript{Y}^{(N)}\right)\right] \\ 
 &\le {1\over c^2} \E^{(N)}\left[ \Var\left( \lsum_{i=1}^N \log\left(1 + R^{(N)}_{i,Y^{(N)}_i}\right)\ \middle| \EuScript{Y}^{(N)}\right)\right],
\end{align*}
where we used Kolmogorov inequality. Using condition (iii) for the random variables $R^{(N)}_{i,j}$ we can write 
\begin{align*}
\Var&\left( \lsum_{i=1}^N \log\left(1 + R^{(N)}_{i,Y^{(N)}_i}\right)\ \middle| \EuScript{Y}^{(N)}\right) = \lsum_{k=0}^{N_T} \Var\left( \lsum_{i=\tau^{(N)}_{k}}^{\tau^{(N)}_{k+1} \wedge N} \log\left(1 + R^{(N)}_{i, y_k}\right)\ \middle| \EuScript{Y}^{(N)}\right) \\
&\le \lsum_{k=0}^{N_T} \left| \Var\left( \lsum_{i=\tau^{(N)}_{k}}^{\tau^{(N)}_{k+1} \wedge N} \log\left(1 + R^{(N)}_{i, y_k}\right)\ \middle| \EuScript{Y}^{(N)}\right) - \sigma^2_{y_k} \theta^{(N)}_k \right| + \lsum_{k=0}^{N_T} \sigma^{2}_{y_k}\theta^{(N)}_k\\
&\le \zeta^{(N)} + T \sigma^2 ,
\end{align*}
a.s., where $y_k = Y^{(N)}_{\tau^{(N)}_k}$, $\sigma^2 = \sup_j \sigma^2_j$ defined in (\ref{sigma_def}) and
$$ \zeta^{(N)} = \lsum_{k=0}^{N_T} \left| \Var\left( \lsum_{i=\tau^{(N)}_{k}}^{\tau^{(N)}_{k+1} \wedge N} \log\left(1 + R^{(N)}_{i, y_k}\right)\ \middle| \EuScript{Y}^{(N)}\right) - \sigma^2_{y_k} \theta^{(N)}_k \right|.$$
Note that $\zeta^{(N)} = \zeta^{(N)}\left(\tau^{(N)}_i, y_i,\ i\ge 0\right)$.
For a given $\varepsilon > 0$ let us choose $m=m(\varepsilon)$ such that $\mathbb{P}(N_T > m) < \varepsilon$, and denote by $\tilde \zeta^{(N)} = \zeta^{(N)} \mathbbm{1}_{N_T \le m}$. Therefore we can write 
$$ \tilde\zeta^{(N)} = \tilde\zeta^{(N)}(t_1,\ldots, t_m, j_1, \ldots, j_m). $$
From (\ref{unif_conv_of_variances}) we have that
$$\sup_{\substack{t_1<\ldots < t_m \le T, \\ j_1,\ldots,j_m \in \N}} \tilde \zeta^{(N)}(t_1,\ldots, t_m, j_1,\ldots, j_m) \to 0, N\to \infty.$$

Thus, we have
\begin{align*}
 \mathbb{P}^{(N)}&\left(\sup_{t\in [0,T]} \left|\tilde U^{(N)}_t\right| \ge c\right) \le \mathbb{P}^{(N)}\left(N_T > m\right) + \E^{(N)}\left[\zeta^{(N)} \mathbbm{1}_{N_T \le m}\right] + {T \over c^2} \sigma^2 \\
 &\le \mathbb{P}^{(N)}\left(N_T > m\right) + \sup_{\substack{t_1<\ldots < t_m \le T, \\ j_1,\ldots,j_m \in \N}} \tilde \zeta^{(N)}(t_1,\ldots, t_m, j_1,\ldots, j_m) + {T \over c^2} \sigma^2.
\end{align*}
By selecting big enough $N$ and $m$ we can make $ \mathbb{P}^{(N)}\left(N_T > m\right)$ and \\$\sup_{\substack{t_1<\ldots < t_m \le T, \\ j_1,\ldots,j_m \in \N}} \tilde \zeta^{(N)}(t_1,\ldots, t_m, j_1,\ldots, j_m)$ arbitrarily small, so that for any $\varepsilon > 0$ we obtain the following upper bound:  
\begin{align*}
 \lim_{c\to \infty} \limsup_N \mathbb{P}^{(N)}&\left(\sup_{t\in [0,T]} \left|\tilde U^{(N)}_t\right| \ge c\right) \\
 &\le \varepsilon + \limsup_N \E^{(N)}\left[\zeta^{(N)} \mathbbm{1}_{N_T \le m}\right] + \lim_{c\to \infty} {T \over c^2} \sigma^2 = \varepsilon.
\end{align*}
From the relation  $\sup_{N,t} M^{(N)}_t < \infty$   we can deduce  that 
$$ \lim_{c\to \infty} \limsup_N \mathbb{P}^{(N)}\left(\sup_{t\in [0,T]} \left| U^{(N)}_t\right| \ge c\right) = 0, $$
which concludes the proof of (\ref{UNt_tight_cond_1}). \\

Now our goal is  to prove (\ref{UNt_tight_cond_2}). 
Let $\Delta > 0$  be a fixed number, and choose $m$, $\delta > 0$ and $N_0 = N_0(\delta)$ such that for all $N\ge N_0$
\begin{equation}\label{lem2_m_ineq}
\sup_N \mathbb{P}^{(N)}\left(N_T > m\right) < \Delta / 3,
\end{equation}
and
\begin{equation}\label{lem2_N_ineq}
 \mathbb{P}^{(N)}\left(\min_i \theta^{(N)}_i > \delta, N_T \le m\right) \ge 1 - \Delta/3.
\end{equation}
The latter is possible because
\begin{align*}
\mathbb{P}^{(N)}&\left(\min_i \theta^{(N)}_i > \delta, N_T \le m\right) = \lsum_{k=0}^m \mathbb{P}^{(N)}\left(\min_{0\le i \le k} \theta^{(N)}_i > \delta, N_T=k\right)\\
&= \lsum_{k=0}^m\mathbb{P}^{(N)}\left(\theta^{(N)}_i > \delta, 0\le i\le k,\ \lsum_{i=0}^k \theta^{(N)}_i < T \le \lsum_{i=1}^{k+1}\theta^{(N)}_i \right)\\
&\to \mathbb{P}^{(N)}(N_T \le m),\ \delta \to 0,
\end{align*}
and $\mathbb{P}^{(N)}(N_T \le m) \to 1,\ m\to \infty$.
Then we can write
\begin{align*}
\mathbb{P}^{(N)}&\left( \sup_{\substack{t,s \in [0,T]\\ |t-s|<\delta }} \left|U^{(N)}_t - U^{(N)}_s\right| \ge \varepsilon \right) \le \mathbb{P}^{(N)}\left(N_T > m\right) + \mathbb{P}^{(N)}\left(N_T \le m, \min_i \theta^{(N)}_i \le \delta\right)  \\
&+ \E^{(N)}\left[\mathbb{P}^{(N)}\left( \sup_{\substack{t,s \in [0,T]\\ |t-s|<\delta }} \left|U^{(N)}_t - U^{(N)}_s\right| \ge \varepsilon \middle|\ \EuScript{Y}^{(N)}\right)\mathbbm{1}_{N_T \le m}\mathbbm{1}_{\min_i \theta^{(N)}_i > \delta}\right] \\
&\le {2\Delta \over 3} + \E^{(N)}\left[\mathbb{P}^{(N)}\left( \sup_{\substack{t,s \in [0,T]\\ |t-s|<\delta }} \left|U^{(N)}_t - U^{(N)}_s\right| \ge \varepsilon \middle|\ \EuScript{Y}^{(N)}\right)\mathbbm{1}_{B}\right],
\end{align*}
where $B = \left\{N_T \le m,\ \min_i \theta^{(N)}_i > \delta\right\}$. Note  that when a trajectory of $\left(Y^{(N)}_t\right)_{t\in [0,T]}$ belongs to $B$, there are only two possible cases in which there exist $t,s\in [0,T]$ such that $|t-s| < \delta$ and $\left|U^{(N)}_t - U^{(N)}_s\right| \ge \varepsilon$:\\
(a) when there are no jumps occurring  between $s$ and $t$, so that there exists  $j$   such that $\tau^{(N)}_j \le s<t < \tau^{(N)}_{j+1}$, or\\
(b) there is exactly one jump between $s$ and $t$. \\
Let us consider such $\omega \in B$ that
$$ \sup_{\substack{t,s \in [0,T]\\ |t-s|<\delta }} \left|U^{(N)}_t(\omega) - U^{(N)}_s(\omega)\right| \ge \varepsilon. $$
If option (a) is valid, then  
 $$\max_i \sup_{\substack{t,s \in \left[\tau^{(N)}_{i-1}(\omega), \tau^{(N)}_i(\omega)\right)\\ |t-s|<\delta }}\left|U^{(N)}_t(\omega) - U^{(N)}_s(\omega)\right| \ge \varepsilon, $$
otherwise there exists $i$  such that
$$ \sup_{\substack{s< \tau^{(N)}_i(\omega) < t\\ t-s<\delta}} \left|U^{(N)}_t(\omega) - U^{(N)}_s(\omega)\right| \ge \varepsilon. $$
Note  that, due to the condition (i),  we can select $N$ so big  that \\ $\sup_t\left|U^{(N)}_{t} - U^{(N)}_{t-}\right| < \varepsilon/3.$
Thus, we can conclude that either:
$$\sup_{\substack{t,s \in \left[\tau^{(N)}_{i-1}(\omega), \tau^{(N)}_i(\omega)\right)\\ |t-s|<\delta }}\left|U^{(N)}_t(\omega) - U^{(N)}_s(\omega)\right| \ge \varepsilon/3, $$
or
$$\sup_{\substack{t,s \in \left[\tau^{(N)}_{i}(\omega), \tau^{(N)}_{i+1}(\omega)\right)\\ |t-s|<\delta }}\left|U^{(N)}_t(\omega) - U^{(N)}_s(\omega)\right| \ge \varepsilon/3. $$
With this in mind we can write 
\begin{align*}
\E^{(N)}&\left[\mathbb{P}^{(N)}\left( \sup_{\substack{t,s \in [0,T]\\ |t-s|<\delta }} \left|U^{(N)}_t - U^{(N)}_s\right| \ge \varepsilon \middle|\ \EuScript{Y}^{(N)}\right)\mathbbm{1}_{B}\right] \\
&\le 2\E^{(N)}\left[\mathbb{P}^{(N)}\left( \max_{1\le i\le m}\sup_{\substack{t,s \in \left[\tau^{(N)}_{i-1}, \tau^{(N)}_i\right)\\ |t-s|<\delta }} \left|U^{(N)}_t - U^{(N)}_s\right| \ge {\varepsilon\over 3}   \middle|\ \EuScript{Y}^{(N)}\right)\mathbbm{1}_{B}\right] \\
&\le 2\lsum_{i=1}^m\E^{(N)}\left[\mathbb{P}^{(N)}\left(\sup_{\substack{t,s \in \left[\tau^{(N)}_{i-1}, \tau^{(N)}_i\right)\\ |t-s|<\delta }} \left|U^{(N)}_t - U^{(N)}_s\right| \ge {\varepsilon\over 3}   \middle|\ \EuScript{Y}^{(N)}\right)\mathbbm{1}_{B}\right].
\end{align*}
If trajectory of $\left(Y^{(N)}_t\right)_{t\in [0,T]}$  is fixed, we conclude that
$$ \left(U^{(N)}_t - U^{(N)}_{u}\right)_{t\in \left[u,v\right)} \to \left(\left(\mu_j - {\sigma^2_j\over 2}\right)\left(t - u\right)  + \sigma_{j}\left(W_t - W_u\right)\right)_{t\in [u,v)},$$
in Skorokhod topology, where $(W_t)_{t\ge 0}$ is a Wiener process, $u=\tau^{(N)}_{i-1}$, $v = \tau^{(N)}_i$, $j=Y^{(N)}_{\tau^{(N)}_{i-1}}$.
Denote by $\omega_{\delta, i}(U)$ the modulus of continuity of the process $\left(U^{(N)}_t\right)_{t\ge 0}$ on a time interval between
$(i-1)$th and $i$th jumps, so that
$$\omega_{\delta, i,j}\left(U^{(N)}\right) = \sup_{\substack{t,s \in \left[\tau^{(N)}_{i-1}, \tau^{(N)}_i\right)\\ |t-s|<\delta }} \left|U^{(N)}_t - U^{(N)}_s\right| = \sup_{\substack{t,s \in \left[\tau^{(N)}_{i-1}, \tau^{(N)}_i\right)\\ |t-s|<\delta }} \left|\lsum_{k=s^{(N)}}^{t^{(N)}} \log\left( 1 + R^{(N)}_{kj}\right)\right|.$$
Let us also denote by $\omega_{\delta, i,j}(Z)$ the modulus of continuity of the process 
$$ Z_t = \left(\mu_j - {\sigma^2_j\over 2}\right)\left(t - u\right)  + \sigma_{j}\left(W_t - W_u\right),$$
on the interval $[u,v)$, where $u=\tau^{(N)}_{i-1}$, $v=\tau^{(N)}_i$, and let $\omega_{\delta, i}(W)$ be a respective modulus of continuity of a Wiener process:  
$$ \omega_{\delta, i}(W) = \sup_{\substack{t,s \in \left[\tau^{(N)}_{i-1}, \tau^{(N)}_i\right)\\ |t-s|<\delta }} \left|W_t - W_s\right|.$$
Note that $\{\tau^{(N)}_k,\ k\ge 0\}$ are independent of $\left(W_t\right)_{t\ge 0}$.
Since modulus of continuity is a continuous function in Skorokhod topology, and the processes   $\left(U^{(N)}_t - U^{(N)}_{u}\right)_{t\in \left[u,v\right)}$ converge weakly in Skorokhod topology to $ (Z_t)_{t\in [u,v)}$, 
we have
$$ \lim_{N\to \infty} \E^{(N)} \left[\omega_{\delta, i,j}\left(U^{(N)}\right)\right] =  \E \left[\omega_{\delta, i,j}\left(Z\right)\right] \le \sup_j\left( \left(\mu_j - \sigma^2_j\over 2\right)\delta + \sigma_j \E \omega_{\delta, i}(W)\right).$$
With this in mind we can write
\begin{align*}
\mathbb{P}^{(N)}&\left(\sup_{\substack{t,s \in \left[\tau^{(N)}_{i-1}, \tau^{(N)}_i\right)\\ |t-s|<\delta }} \left|U^{(N)}_t - U^{(N)}_s\right| \ge {\varepsilon\over 3}   \middle|\ \EuScript{Y}^{(N)}\right) \le {3\over \varepsilon} \E^{(N)}\left[\omega_{\delta, i}\left(U^{(N)}\right)\middle|\ \EuScript{Y}^{(N)}\right] \\
&\to {3\over \varepsilon} \E[\omega_{\delta,i}(Z)] \le \sup_j {3\over \varepsilon}\left(\left(\mu_j - \sigma^2_j\over 2\right)\delta + \sigma_j \E\omega_{\delta, i}(W)\right)\\
&\le 2{\mu + \sigma\over 2\varepsilon} \delta + {3\sigma\over \varepsilon} \E\omega_{\delta,i}(W) \le 2{\mu + \sigma\over 3\varepsilon} \delta + {3\sigma\over \varepsilon} \E\omega_{\delta,[0,T]}(W),
\end{align*}
where convergence holds as $N\to \infty$, and
$$ \omega_{\delta,[0,T]}(W) = \sup_{\substack{t,s\in [0,T]\\ |t-s|<\delta}} |W_t - W_s|$$
is a modulus of continuity of a Wiener process on the interval $[0,T]$ which  does not depend on $i$. 
It is well-known that the modulus of continuity of the Wiener process satisfies the limit relation 
$$\lim_{\delta \to 0} \E\omega_{\delta,[0,T]}(W) = 0.$$
Recall that we fixed  $\Delta > 0$ and chose  $m$ such that
$$ \sup_N \mathbb{P}^{(N)}(N_T > m) < \Delta/3.$$
Let us now choose  $\delta$ so small that
$$ 4m\left(3{\mu + \sigma\over 2\varepsilon} \delta + {3\sigma\over \varepsilon} \E\omega_{\delta,[0,T]}(W)\right) < \Delta/3. $$
Finally, we can choose $N_1 = N_1(\delta) \ge N_0(\delta)$ such that for all $N> N_1$
$$ \mathbb{P}^{(N)}\left(\sup_{\substack{t,s \in \left[\tau^{(N)}_{i-1}, \tau^{(N)}_i\right)\\ |t-s|<\delta }} \left|U^{(N)}_t - U^{(N)}_s\right| \ge {\varepsilon\over 3}   \middle|\ \EuScript{Y}^{(N)}\right) \le {\Delta\over 6m}, $$
a.s., so that
\begin{align*}
\E^{(N)}&\left[\mathbb{P}^{(N)}\left( \sup_{\substack{t,s \in [0,T]\\ |t-s|<\delta }} \left|U^{(N)}_t - U^{(N)}_s\right| \ge \varepsilon \middle|\ \EuScript{Y}^{(N)}\right)\mathbbm{1}_{B}\right] \\
&\le 2\lsum_{i=1}^m\E^{(N)}\left[\mathbb{P}^{(N)}\left(\sup_{\substack{t,s \in \left[\tau^{(N)}_{i-1}, \tau^{(N)}_i\right)\\ |t-s|<\delta }} \left|U^{(N)}_t - U^{(N)}_s\right| \ge {\varepsilon\over 3}   \middle|\ \EuScript{Y}^{(N)}\right)\mathbbm{1}_{B}\right]\\
&\le {2m\Delta \over 6m} = {\Delta \over 3}.
\end{align*}
Finally, we proved that for any $\Delta > 0$ there exists $\delta > 0$ and $N_1=N_1(\delta)$ such that for all $N\ge N_1$ it holds that
\begin{align*}
\mathbb{P}^{(N)}&\left( \sup_{\substack{t,s \in [0,T]\\ |t-s|<\delta }} \left|U^{(N)}_t - U^{(N)}_s\right| \ge \varepsilon \right) \le \Delta,
\end{align*}
which completes the proof of the lemma.
\end{proof}

\begin{lem}\label{uniform_convergence_of_char_functions}
     In the notation introduced in section \ref{vol_conv}, we have the following convergence:
     $$ \sup_{s<t, j\in \N} \Delta^{(N)}_j(s,t) \to 0, N\to \infty,$$
     for any $\alpha \in \R$, where
     $$ \Delta^{(N)}_j(s,t) = \left|\E^{(N)} \exp\left(i \alpha \lsum_{k = s^{(N)}}^{t^{(N)}} \log\left(1 + R^{(N)}_{k,j}\right) \right) - \varphi_{j,s,t}(\alpha)\right|.$$
 \end{lem}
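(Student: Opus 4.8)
The plan is to factor the characteristic function using independence, run a Lindeberg-type two-term Taylor expansion, and then show that every moment sum entering the Gaussian limit converges uniformly in $j$, $s$ and $t$. The pointwise version of this convergence is exactly Theorem 5.53 of \cite{felmer} invoked in the main proof; the content of this lemma is the uniformity, so the whole effort goes into making the estimates independent of $j$ and of the pair $s<t$. Set $L^{(N)}_{k,j} = \log(1 + R^{(N)}_{k,j})$ and $\phi_{k,j}(\alpha) = \E^{(N)}\exp(i\alpha L^{(N)}_{k,j})$. By the independence in (i), together with the uniform bound $|R^{(N)}_{k,j}| \le \gamma_N$,
$$ \E^{(N)}\exp\left(i\alpha\lsum_{k=s^{(N)}}^{t^{(N)}} L^{(N)}_{k,j}\right) = \prod_{k=s^{(N)}}^{t^{(N)}} \phi_{k,j}(\alpha). $$

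Since $\gamma_N\to 0$ we have $|L^{(N)}_{k,j}|\le 2\gamma_N$ for large $N$, uniformly in $k,j$, so a second-order expansion of both $\log$ and $\exp$ gives
$$ \phi_{k,j}(\alpha) = 1 + i\alpha m_{k,j} - \frac{\alpha^2}{2} v_{k,j} + \epsilon_{k,j},\quad |\epsilon_{k,j}| \le C|\alpha|^3 \E^{(N)}\left|L^{(N)}_{k,j}\right|^3 \le C'\gamma_N v_{k,j}, $$
where $m_{k,j} = \E^{(N)} L^{(N)}_{k,j}$ and $v_{k,j} = \E^{(N)}\left(L^{(N)}_{k,j}\right)^2$. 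I would then introduce the surrogate factors $w_{k,j} = \exp\left(i\alpha m_{k,j} - \frac{\alpha^2}{2}\left(v_{k,j}-m_{k,j}^2\right)\right)$, which satisfy $|w_{k,j}|\le 1$ because $v_{k,j}-m_{k,j}^2 = \Var\left(L^{(N)}_{k,j}\right)\ge 0$; expanding the exponential shows $w_{k,j}$ matches the first- and second-order terms of $\phi_{k,j}(\alpha)$, so that $\left|\phi_{k,j}(\alpha) - w_{k,j}\right| \le C\gamma_N v_{k,j}$.

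Both $\phi_{k,j}(\alpha)$ and $w_{k,j}$ have modulus at most $1$, so the elementary telescoping bound $\left|\prod_k z_k - \prod_k \zeta_k\right| \le \sum_k |z_k - \zeta_k|$ yields
$$ \left|\prod_k \phi_{k,j}(\alpha) - \prod_k w_{k,j}\right| \le C\gamma_N \lsum_{k=s^{(N)}}^{t^{(N)}} v_{k,j}. $$
Because $\prod_k w_{k,j} = \exp\left(i\alpha\lsum_k m_{k,j} - \frac{\alpha^2}{2}\lsum_k \Var\left(L^{(N)}_{k,j}\right)\right)$, it then suffices to prove, uniformly in $j$ and $s<t$, that $\lsum_k v_{k,j}$ stays bounded (which kills the right-hand side above), that $\lsum_k \Var\left(L^{(N)}_{k,j}\right)\to \sigma_j^2(t-s)$, and that $\lsum_k m_{k,j}\to \left(\mu_j - \frac{\sigma_j^2}{2}\right)(t-s)$. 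Using $\log(1+x)=x-\frac{x^2}{2}+O(x^3)$, all three reduce to sums of $\Var R^{(N)}_{k,j}$ and of $\mu_j^{(N)}=\E R^{(N)}_{k,j}$, with errors of order $\gamma_N\lsum_k \E\left(R^{(N)}_{k,j}\right)^2 = O(\gamma_N)$; the drift contribution uses $\left(t^{(N)}-s^{(N)}\right)\mu_j^{(N)}\to \mu_j(t-s)$, which follows from (ii) (it forces $N\mu_j^{(N)}\to \mu_j T$) together with $|\mu_j^{(N)}|\le \gamma_N$ and $|\mu_j|\le\mu$.

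The genuine difficulty is the joint uniformity over the countable state space and over the continuum of pairs $s<t\in[0,T]$. Condition (iii) only gives uniformity in $j$ at a fixed $t$, so I would upgrade it by a P\'olya--Dini argument: the partial sums $V^{(N)}_j(t)=\lsum_{i=1}^{t^{(N)}}\Var R^{(N)}_{i,j}$ are nondecreasing in $t$ and, for each fixed $t$, converge uniformly in $j$ to the continuous nondecreasing function $\sigma_j^2 t$; comparing on a finite grid of mesh $\eta$ and using $\sigma_j^2\le\sigma^2$ from \eqref{sigma_def} gives $\sup_{t\in[0,T],\,j}\left|V^{(N)}_j(t)-\sigma_j^2 t\right|\to 0$, whence the increments over $[s,t]$ converge uniformly in $(s,t,j)$. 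Feeding this, together with the $O(\gamma_N)$ error bounds and $|\varphi_{j,s,t}(\alpha)|\le 1$, back into the two displayed inequalities yields $\sup_{s<t,\,j}\Delta^{(N)}_j(s,t)\to 0$.
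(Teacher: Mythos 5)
Your architecture is genuinely different from the paper's. The paper gets uniformity by quoting a Berry--Esseen-type bound (Chung, Lemma 5, Section 7.4) applied with the third-moment estimate $\lsum_k \E\left|\log\left(1+R^{(N)}_{k,j}\right)\right|^3 \le C\gamma_N$ taken from the proof of Theorem 5.53 in F\"ollmer--Schied, whereas you run a self-contained Lindeberg argument: factorization by independence, second-order expansion of each factor, telescoping against Gaussian surrogate factors, and then direct uniform control of the accumulated means and variances. Your P\'olya--Dini upgrade of condition (iii) --- exploiting monotonicity of $t\mapsto \lsum_{i\le t^{(N)}}\Var R^{(N)}_{i,j}$ and the uniform Lipschitz bound $\sigma_j^2\le\sigma^2$ --- is correct, and it makes explicit a uniformity in $(s,t,j)$ that the paper's proof leaves entirely implicit.

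However, the drift step has a genuine gap. You assert that $\left(t^{(N)}-s^{(N)}\right)\mu^{(N)}_j \to \mu_j(t-s)$ \emph{uniformly in $j$}, justified by ``(ii) forces $N\mu^{(N)}_j\to\mu_j T$'' plus the bounds $|\mu^{(N)}_j|\le\gamma_N$, $|\mu_j|\le\mu$. But condition (ii) is pointwise in $j$: it gives $N\mu^{(N)}_j\to\mu_j T$ for each fixed $j$, and pointwise convergence on the countably infinite index set $\N$ does not upgrade to uniform convergence --- unlike the variance case, there is no monotonicity or continuity structure in the index $j$ on which to run a Dini-type argument. Concretely, nothing in (i)--(iv) excludes $\mu^{(N)}_j = \mu_j T/N$ for $N>j^2$ and $\mu^{(N)}_j=\gamma_N/2$ for $N\le j^2$ (with, say, $\gamma_N = 1/\log N$ and variances exactly $\sigma_j^2 T/N$): all four conditions hold, yet $\sup_j N\left|\mu^{(N)}_j\right| \ge N\gamma_N/2 \to\infty$, so for $j\ge\sqrt{N}$ the centering of $S^{(N)}_j(0,T)$ escapes to infinity and the uniform convergence claimed by the lemma fails. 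Moreover, the same uncontrolled quantity $N\left(\mu^{(N)}_j\right)^2\le\gamma_N\, N\left|\mu^{(N)}_j\right|$ is silently needed in two further places in your argument: to keep $\lsum_k v_{k,j}$ uniformly bounded (which kills your telescoping error $C\gamma_N\lsum_k v_{k,j}$), and to make the remainder $\gamma_N\lsum_k\E\left(R^{(N)}_{k,j}\right)^2$ actually $O(\gamma_N)$. To be fair, this defect traces back to the lemma's hypotheses rather than to your strategy alone: the paper's own proof needs exactly the same uniform matching of means and variances before Chung's lemma (which compares a centered, normalized sum to a standard Gaussian) can be invoked, and it passes over the point in silence. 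A clean repair is to read (ii) as uniform in $j$, i.e. $\sup_j\left|\left(1+\mu^{(N)}_j\right)^N - e^{\mu_j T}\right|\to 0$, equivalently $\sup_j\left|N\mu^{(N)}_j-\mu_j T\right|\to 0$; with that strengthening your proof goes through as written.
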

 \begin{proof}
 Recall the notation (\ref{phi_jst_def}) for  $\varphi_{j,s,t}$: 
 \begin{equation*}
   \varphi_{j,s,t}(\alpha) :=\exp\left(-{\alpha^2\sigma^2_j(t-s)\over 2} + i\alpha \left(\mu_j - {\sigma^2_j\over 2}\right)(t-s)\right).
 \end{equation*}
From \cite{felmer}, Theorem 5.53, we know that the random variables 
$$ S^{(N)}_j(s,t) = \lsum_{k=s^{(N)}}^{t^{(N)}} \log\left(1 + R^{(N)}_{k,j}\right)$$
weakly converge to $Z_j(s,t) = \sigma_j (W_t - W_s) + \left(\mu_j - {\sigma^2_j\over 2}\right)(t-s)$, where $\left(W_t\right)_{t\ge 0}$ is a Wiener process. Note that $\varphi_{j,s,t}(\alpha)$ is a characteristic function of $Z_j(s,t)$. Let us denote the characteristic function of $S^{(N)}_j(s,t)$ by 
$$ \varphi^{(N)}_{j,s,t}(\alpha) = \E^{(N)} \exp\left(i \alpha \lsum_{k = s^{(N)}}^{t^{(N)}} \log\left(1 + R^{(N)}_{k,j}\right) \right). $$
Since 
$$ \Delta^{(N)}_j(s,t) = \left| \varphi^{(N)}_{j,s,t}(\alpha) - \varphi_{j,s,t}(\alpha) \right|,$$
the convergence $\Delta^{(N)}_j(s,t) \to 0,\ N\to \infty$ is a direct implication of the weak convergence $S^{(N)}_j(s,t) \to^W Z_j(s,t)$. 
Let us show that convergence is uniform in $s,t,j$. 
For doing so, we will need an estimate for the rate of convergence in Central Limit Theorem. In particular, we know from \cite{chung}  (Lemma 5, Section 7.4,  p. 240)   that for any  $\alpha < 1/(4 \Gamma^{(N)}_j(s,t))$ 
$$ \Delta^{(N)}_j(s,t) \le C(\alpha) \Gamma^{(N)}_j(s,t), $$
where 
$$ \Gamma^{(N)}_j(s,t) = \E\left|\lsum_{k = s^{(N)}}^{t^{(N)}} \log\left(1 + R^{(N)}_{k,j}\right)\right|^3.$$
Applying  the  proof of Theorem 5.53, \cite{felmer}, p. 252,   we obtain  that $\Gamma^{(N)}_j(s,t)$ can be estimated as
$$ \Gamma^{(N)}_j(s,t) \le |\gamma_N| C \sigma^2_j (t-s) \le |\gamma_N| C_1 \to 0, N\to \infty,$$
where $\{\gamma_N, N\ge 1\}$ is a dominating sequence taken from condition (i), and $C_1=C_1(\mu, \sigma^2, T)$  is a constant.
Thus, we have shown that for every fixed $\alpha \in \R$
$$ \sup_{s<t, j\in \N} \Delta^{(N)}_j(s,t) \le |\gamma_N| C_2 \to 0,\ N\to \infty, $$
where $C_1 = C_1(\mu, \sigma^2, T, \alpha)$.
\end{proof}

\begin{lem}\label{phi_conv}
In terms of  the notation from Section \ref{vol_conv} we can state that  
for any $\varepsilon > 0$ there exists  $m$ and $N_0$ such  that for all $N\ge N_0$:
$$ \left|\E^{(N)}\left[\prod\limits_{k=0}^{n-1}\E^{(N)}\left[\prod\limits_{j=1}^{m_k}\varphi_{k,j}\ \middle|\ \EuScript{Y}^{(N)} \right]\mathbbm{1}_{N_T \le m}\right] -  \E\left[\Xi \mathbbm{1}_{N_t \le m}\right]\right| <\varepsilon. $$

\end{lem}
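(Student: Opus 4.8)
The plan is to recognize both sides of the claimed inequality as expectations of one and the same bounded functional of the \emph{switching path}, evaluated on $Y^{(N)}$ and on $Y$ respectively, and then to pass to the limit using the Skorokhod convergence $Y^{(N)}\to Y$ established in Theorem~\ref{YN_conv}. First I would simplify the left-hand side. Each factor $\varphi_{k,j}=\varphi_{y_{k,j-1},s_{k,j-1},s_{k,j}}(\alpha_{k+1})$ depends only on the jump times $s_{k,j}$ and the states $y_{k,j}$ of $\left(Y^{(N)}_t\right)_{t\in[0,T]}$, hence it is $\EuScript{Y}^{(N)}$-measurable; consequently the inner conditional expectation is superfluous and
$$\prod_{k=0}^{n-1}\E^{(N)}\!\left[\prod_{j=1}^{m_k}\varphi_{k,j}\ \middle|\ \EuScript{Y}^{(N)}\right]=\prod_{k=0}^{n-1}\prod_{j=1}^{m_k}\varphi_{k,j}.$$
Recalling the explicit form of $\varphi_{j,s,t}$ in (\ref{phi_jst_def}) and that $Y^{(N)}_s=y_{k,j-1}$ on $[s_{k,j-1},s_{k,j})$, the telescoping of the exponents over $j$ turns this product into $G\!\left(Y^{(N)}\right)$, where for a càdlàg switching path $y$ I set
$$ G(y)=\prod_{k=0}^{n-1}\exp\left(-\frac{\alpha_{k+1}^2}{2}\int_{t_k}^{t_{k+1}}\sigma^2_{y_s}\,ds+i\alpha_{k+1}\int_{t_k}^{t_{k+1}}\left(\mu_{y_s}-\frac{\sigma^2_{y_s}}{2}\right)ds\right). $$

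On the other side, conditioning $\Xi$ on $\EuScript{Y}=\sigma\{Y_s,0\le s\le T\}$ and using that $W$ has independent increments and is independent of $Y$, the Gaussian increments $\int_{t_k}^{t_{k+1}}\sigma_{Y_s}\,dW_s$ are conditionally independent and centered with conditional variance $\int_{t_k}^{t_{k+1}}\sigma^2_{Y_s}\,ds$; hence $\E[\Xi\mid\EuScript{Y}]=G(Y)$, and since $\{N_T\le m\}\in\EuScript{Y}$ we obtain $\E[\Xi\,\mathbbm{1}_{N_T\le m}]=\E[G(Y)\,\mathbbm{1}_{N_T\le m}]$. Thus it remains to prove that $\E^{(N)}[G(Y^{(N)})\mathbbm{1}_{N_T\le m}]\to\E[G(Y)\mathbbm{1}_{N_T\le m}]$.

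To avoid passing the discontinuous indicator $\mathbbm{1}_{N_T\le m}$ to the limit, I would first remove it. Since $|G|\le1$,
$$ \left|\E^{(N)}[G(Y^{(N)})\mathbbm{1}_{N_T\le m}]-\E[G(Y)\mathbbm{1}_{N_T\le m}]\right|\le\left|\E^{(N)}[G(Y^{(N)})]-\E[G(Y)]\right|+\prb^{(N)}(N_T>m)+\prb(N_T>m). $$
By Lemma~\ref{nt_exp_moment} together with the subsequent Remark, $\sup_N\prb^{(N)}(N_T>m)+\prb(N_T>m)\to0$ as $m\to\infty$, so I can fix $m$ making the last two terms smaller than $\varepsilon/2$ uniformly in $N$, and it then suffices to treat the full expectation $\E^{(N)}[G(Y^{(N)})]\to\E[G(Y)]$.

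Finally I would handle this by the continuous mapping theorem applied to the Skorokhod convergence $Y^{(N)}\Rightarrow Y$ of Theorem~\ref{YN_conv}. The functional $G$ is bounded by $1$ and depends on the path only through the occupation integrals $\int_{t_k}^{t_{k+1}}h(y_s)\,ds$ of the bounded functions $h\in\{\sigma^2_\cdot,\ \mu_\cdot-\sigma^2_\cdot/2\}$; such integral functionals are continuous, in the Skorokhod topology, at every path that is continuous at the fixed endpoints $t_0,\dots,t_n$. Because the jump times of the limit process $Y$ have absolutely continuous distributions (the $\tau_k$ are hypoexponential, as already used in Lemma~\ref{nt_exp_moment}), $\prb(Y\text{ jumps at some }t_k)=0$, so $G$ is continuous $\prb_Y$-almost everywhere; the continuous mapping theorem then yields $G(Y^{(N)})\Rightarrow G(Y)$, and $G$ being uniformly bounded gives convergence of expectations. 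Choosing $N_0$ so that $|\E^{(N)}[G(Y^{(N)})]-\E[G(Y)]|<\varepsilon/2$ for $N\ge N_0$ completes the argument. The main obstacle is precisely this last step: verifying that the occupation-time functionals are Skorokhod-continuous off a $\prb_Y$-null set and controlling behaviour at the deterministic endpoints $t_k$; the tail split above is what keeps the non-continuous indicator $\mathbbm{1}_{N_T\le m}$ from entering the limit.
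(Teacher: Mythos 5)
Your proposal is correct and follows essentially the same route as the paper: both identify the conditional product as a bounded occupation-integral functional of the switching path (the paper's $h$, your $G$), reduce the right-hand side to the expectation of that same functional of $Y$, split off the indicator $\mathbbm{1}_{N_T\le m}$ via uniform tail bounds on $N_T$, and conclude from the Skorokhod convergence of Theorem \ref{YN_conv} applied to a bounded continuous functional. If anything you are slightly more careful than the paper on two points: you explicitly compute $\E[\Xi\mid \EuScript{Y}] = G(Y)$ where the paper simply writes $\Xi = h(Y)$, and you justify the continuous mapping step via almost-everywhere continuity (using that the jump times of $Y$ have densities), whereas the paper instead makes its functional continuous on all of $D([0,T])$ by linearly interpolating $\mu_k$ and $\sigma_k$.
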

\begin{proof}
Let $\{\alpha_1, \ldots, \alpha_n\} \subset \R$ and $0=t_0 \le t_1< \ldots t_n \in \R$ be the numbers selected in Section \ref{vol_conv}, and let us define functions $\mu(x)$ and $\sigma(x)$ as linear interpolations of 
$\mu_k$ and $\sigma_k$. So, 
$$\mu(k + t) = \mu_k(1-t) + \mu_{k+1} t,\ t\in [0,1],\ k\in \N ,$$
$$\sigma(k + t) = \sigma_k(1-t) + \sigma_{k+1} t,\ t\in [0,1], k\in \N.$$
Let us also define functional $h \colon D([0,T]) \to \R$ as follows:
$$ h(x) = \exp\left(\int_0^T \left(- {\alpha^2(t)\sigma^2(x(t))\over 2} + i\alpha(t) \left(\mu(x(t)) - { \sigma^2(x(t))\over 2}\right)\right) dt\right),$$
where $\alpha(t) = \lsum_{k=1}^n \alpha_k \mathbbm{1}_{[t_{k-1}, t_k)}(t)$.

Functional $h$ is clearly  bounded (with $\sup_x |h(x)| \le 1$) and continuous functional on $D([0,T])$.
Therefore, we can use Theorem \ref{YN_conv} and obtain the following convergence:
$$ \E^{(N)}\left[h\left(\left(Y^{(N)}_t\right)_{t\in [0,T]}\right)\right] \to \E\left[h\left(\left(Y_t\right)_{t\in [0,T]}\right)\right],\ N\to \infty. $$
Note that,
$$ \prod\limits_{k=0}^{n-1}\E^{(N)}\left[\prod\limits_{j=1}^{m_k}\varphi_{k,j}\ \middle|\ \EuScript{Y}^{(N)} \right]\mathbbm{1}_{N_T = m} = h\left(\left(Y^{(N)}_t\right)_{t\in [0,T]}\right)\mathbbm{1}_{N_T=m}, $$
and 
$$ \Xi = h\left(\left(Y_t\right)_{t\in [0,T]}\right).$$
Finally, for any $\varepsilon > 0$ we can find such $N_0$ that for all $N\ge N_0$ the following probabilities can be estimated as 
$$ \mathbb{P}^{(N)}(N_T > m) + \mathbb{P}(N_T > m) < \varepsilon/2, $$
and
$$\left| \E^{(N)}\left[h\left(\left(Y^{(N)}_t\right)_{t\in [0,T]}\right)\right] -\E\left[h\left(\left(Y_t\right)_{t\in [0,T]}\right)\right]\right| \le \varepsilon /2, $$
so that 
\begin{align*}
 &\left|\E^{(N)}\left[\prod\limits_{k=0}^{n-1}\E^{(N)}\left[\prod\limits_{j=1}^{m_k}\varphi_{k,j}\ \middle|\ \EuScript{Y}^{(N)} \right]\mathbbm{1}_{N_T \le m}\right] - \E[\Xi\mathbbm{1}_{N_T\le m}]\right| \\
 &= \left| \E^{(N)}\left[\left(\left(Y^{(N)}_t\right)_{t\in [0,T]}\right)\mathbbm{1}_{N_T\le m}\right] - \E\left[[h\left(\left(Y_t\right)_{t\in [0,T]}\right)\mathbbm{1}_{N_T \le m} \right]\right| \\
&\le \left| \E^{(N)}\left[h\left(\left(Y^{(N)}_t\right)_{t\in [0,T]}\right)\right] -\E\left[h\left(\left(Y_t\right)_{t\in [0,T]}\right)\right]\right| \\
&+  \mathbb{P}^{(N)}(N_T > m) + \mathbb{P}(N_T > m) \le \varepsilon.
\end{align*}
\end{proof}


\section*{Acknowledgement}
YM is supported by The Swedish Foundation for Strategic Research, grant UKR24-0004,   by the Japan
Science and Technology Agency CREST, project reference number JPMJCR2115   and    the ToppForsk project no. 274410 of the Research Council of Norway with the title STORM: Stochastics for Time-Space Risk Models.

%
%
%

\end{document}